\definecolor{refkey}{rgb}{1,0,1} 
\definecolor{labelkey}{rgb}{1,0,1}
\newtheorem{theorem}{Theorem}[section]
\newtheorem{lemma}[theorem]{Lemma}
\newtheorem{corollary}[theorem]{Corollary}
\newtheorem*{theorem*}{Theorem}
\def\gap{\vspace{.3cm}\noindent}
\def\smallskip{\vspace{.15cm}}
\def\medskip{\vspace{.3cm}}
\def\text{\mbox}
\renewcommand \P{\mathsf P}
\def\Lf{\mathcal L}
\def\Lf{\mathcal L} 
\newcommand{\R}{\mathbb{R}}
\newcommand{\C}{\mathbb{C}}
\newcommand{\rpn}{\R\P^n}
\def\blue{\color{blue}}
\newcommand{\trace}{\mathsf{trace}}
\newcommand{\pgln}{\mathsf{PGL(n+1,\mathbb{R}})}
\newcommand{\pglf}{\mathsf{PGL(4,\mathbb{R}})}
\newcommand{\glf}{\mathsf{GL(4,\mathbb{R}})}
\newcommand{\slf}{\mathsf{SL(4,\mathbb{R}})}
\newcommand{\rpth}{\mathbb{R}\mathsf{P}^3}
\newcommand{\rpt}{\mathbb{R}\mathsf{P}^2}
\newcommand{\dev}{\mathsf{dev}}
\newcommand{\hol}{\mathsf{hol}}
\newcommand{\Isom}{\mathsf{Isom}}
\newcommand{\Hom}{\mathsf{Hom}}
\newcommand{\cpo}{\C\P^1}
\begin{document}

\title{A 3-Manifold with no Real Projective Structure.}
\author{Daryl Cooper and William Goldman}

\begin{abstract}We show that the connected 
sum of two copies of real 
projective $3$-space does not admit a real projective structure. This 
is the first known example of a connected 3-manifold without 
a real projective structure.
\end{abstract}

\primaryclass{57M25, 57N10}
\keywords{3--manifold, projective geometry, Thurston}

\maketitle
\bigskip\centerline{\textit{Dedicated to Michel Boileau on the occasion of his sixtieth birthday.}} 

\tableofcontents

\section{Introduction}
Geometric structures modeled on homogeneous spaces of Lie groups were introduced
by Ehresmann~\cite{Ehresmann}. If $X$ is a manifold upon which a Lie group $G$ acts
transitively, then an {\em Ehresmann structure\/} modeled on the homogeneous space $(G,X)$
is defined by an atlas of coordinate charts into $X$ such that the coordinate changes
locally lie in $G$. For example, an Ehresmann structure modeled on Euclidean geometry
is equivalent to a flat Riemannian metric. More generally, constant curvature Riemannian
metrics are Ehresmann structures modeled on the sphere or hyperbolic space and their
respective groups of isometries.  
A recent survey of the theory of Ehresmann structures on low-dimensional
manifolds is \cite{G4}.
Ehresmann $(G,X)$-structures
are special cases of flat Cartan connections (modeled on $(G,X)$) with vanishing
curvature. See Sharpe~\cite{Sharpe} for a modern treatment of this theory.

{\em Topological uniformization \/} in dimension $2$ asserts that
every closed $2$-manifold admits a constant curvature Riemannian
metric. Therefore every such surface is uniformized by one of three
Ehresmann structures corresponding to constant curvature Riemannian
geometry.  However, projective and conformal geometry provide two
{\em larger geometries,\/} each of which uniformize {\em all\/} surfaces
(Ehresmann~\cite{Ehresmann}). 

 
The subject received renewed attention in the late 1970's by W.\ Thurston, who cast his 
Geometrization Conjecture (now proved by Perelman) in terms of Ehresmann $(G,X)$-structures. 
Thurston proposed that the relevant geometries are 
{\em locally homogeneous $3$-dimensional Riemannian manifolds.\/} 
These are the $3$-dimensional homogeneous spaces $G/H$ where the 
isotropy group $H$ is compact. 
Up to local isometry, those which cover compact $3$-manifolds fall into eight types.
See Scott~\cite{S},  Thurston~\cite{Thurston} and 
Bonahon \cite{Bonahon} for a description of these geometries. 
Every closed $3$-manifold  canonically decomposes along essential elliptic or 
Euclidean $2$-manifolds into pieces, 
each of which admit a geometric structure of one of these eight types.

Since these eight geometries often themselves admit geometric structures modeled on 
homogeneous spaces with {\em noncompact\/} isotropy group, it is tempting to
search for geometries which uniformize {\em every\/} closed $3$-manifold.
\cite{G2} exhibits examples of closed $3$-manifolds which 
admit no flat conformal structures.
(\cite{G2} also contains examples of $3$-manifolds, such as the
$3$-torus,  which admit no spherical CR-structure.)
The purpose of this note is to exhibit a closed $3$-manifold (namely the connected
sum $\rpth\#\rpth$) which does not admit a flat {\em projective\/} structure.
(On the other hand $\rpth\#\rpth$ does admit a flat conformal and spherical CR structures.)

A {\em $\rpn$-structure\/} on a connected smooth n-manifold $M$ is a
Ehresmann structure modeled on $\rpn$ with coordinate changes locally
in the group $\pgln$ of {\em collineations\/} (projective transformations) of $\rpn$.
Such a structure is defined by
an atlas for $M$ where the transition maps are the restrictions of 
projective transformations to open subset of projective n-space. 
Fix a universal covering space $\tilde{M}\to M$; 
then an atlas as above  determines
an immersion called the {\em developing map\/}
\[ 
\tilde{M}\xrightarrow{~\dev~}\rpn \] 
and a homomorphism called the 
{\em holonomy:}
\[ 
\pi_1M\xrightarrow{~\hol_M~}\pgln \] 
such that for all $\tilde{m}\in\tilde{M}$ and all $g\in\pi_1M$ that
\[\dev_M(g\cdot\tilde{m})\ =\ \hol_M(g)\cdot \dev_M(\tilde{m}).\]

Basic questions include the existence and classification 
of $\rpth$-structures on a given $3$-manifold.

Recent progress on classification is documented in 
\cite{CLT1},\cite{CLT2}:
in particular certain closed hyperbolic 3-manifolds admit
continuous families of projective structures containing the
hyperbolic structure, while others do not.

Every 2-manifold $\Sigma$ 
admits a projective structure. 
The convex ones form a cell of 
dimension $16\mathsf{genus}(\Sigma)$
(Goldman~\cite{G3}).
Suhyoung Choi~\cite{Choi} showed that every $\rpt$-manifold of genus 
$g>1$ decomposes naturally into convex subsurfaces.
Combining these two results completely classify
$\rpt$-structures \cite{CG1},\cite{CG2}.  
Almost all geometric
3-manifolds admit a projective structure, in fact:

\begin{theorem*} 
Suppose that $M$ is a 3-manifold equipped with one of the eight Thurston 
geometric structures. 
Then either $M$ is a Seifert fiber space with a fibration that does
not admit an orientation (and there is a double cover which is real
projective) or else $M$ inherits a uniquely determined real projective
structure underlying the given Thurston geometric structure.
\end{theorem*}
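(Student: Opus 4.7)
The plan is to prove the theorem by case analysis over the eight Thurston geometries $(G,X)$. For each model the goal is to exhibit a faithful inclusion $(G,X)\hookrightarrow(\pglf,\rpth)$ as a sub-Ehresmann structure, or, when this is impossible, an inclusion of an index-two subgroup that forces passage to a double cover. Once such an inclusion is in hand, any $(G,X)$-structure on $M$ gives a projective structure by composing developing maps and holonomies; the ``fibration admits no orientation'' case arises exactly from the second alternative.

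Four of the eight geometries admit a direct projective realization. Hyperbolic geometry embeds via the Klein ball $\mathbb{H}^3\subset\rpth$ with $\Isom(\mathbb{H}^3)=PO(3,1)\subset\pglf$. Euclidean and $\mathrm{Sol}$ isometry groups act affinely on the standard chart $\R^3\subset\rpth$, hence projectively. For spherical geometry the double cover $S^3\to\rpth$ is a local diffeomorphism, so composing the developing map with it yields a projective developing map, and $O(4)$ descends canonically to $\pglf$; every $S^3$-manifold is thus projective.

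The four Seifert fibered geometries $S^2\times\R$, $\mathbb{H}^2\times\R$, $\mathrm{Nil}$, and $\widetilde{SL(2,\R)}$ are handled by constructing for each a projective domain $D\subset\rpth$ preserved by a Lie subgroup $H\subset\pglf$, together with a covering $X\to D$ intertwining a surjection $G\to H$ whose kernel is a cyclic central subgroup tangent to the Seifert fiber direction. Concrete models: $\widetilde{SL(2,\R)}$ covers $PSL(2,\R)$, which embeds as the open set of nonsingular classes in $\rpth=\P(M_2(\R))$; $\mathrm{Nil}$ realizes the Heisenberg group as a unipotent subgroup of $\pglf$; the two product geometries use explicit cylinder and punctured-affine models in $\R^3\subset\rpth$. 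The composition $\tilde M\to X\to D\hookrightarrow\rpth$ is a local diffeomorphism, and its natural holonomy descends to $\pglf$ exactly when $\hol_M$ sends the Seifert fiber class (which is central in $\pi_1 M$) into the kernel of $G\to H$. This condition is precisely that the Seifert fibration on $M$ admits an orientation; when it fails, the obstruction is of order two, and the corresponding orientation double cover of the fibration inherits a projective structure.

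Uniqueness follows because each inclusion $(G,X)\hookrightarrow(\pglf,\rpth)$ is determined up to $\pglf$-conjugation by the condition that $G$ act transitively on $X$, leaving no ambiguity in the resulting projective structure. The main obstacle will be constructing the projective models of $\mathrm{Nil}$ and $\widetilde{SL(2,\R)}$ explicitly --- producing faithful representations of the isometry groups into $\pglf$ and identifying the kernel of descent with the correct cyclic central subgroup tangent to the Seifert fiber. Once these models are in place, verifying the equivalence ``fibration orientable'' $\iff$ ``fiber class holonomy vanishes in $\pglf$'' is a careful but mechanical check.
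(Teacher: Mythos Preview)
Your overall approach --- exhibit for each geometry $(X,G)$ a representation into $(\rpth,\pglf)$ and then compose developing maps --- is exactly what the paper does. In fact the paper does not give a detailed proof at all: it attributes the result to Thiel and Moln\'ar and summarizes it in one sentence, namely that the theorem follows from such representations, with the caveat that for the two product geometries $S^2\times\R$ and $\mathbb{H}^2\times\R$ one must replace $G=\Isom(X)$ by the index-two subgroup $\Isom_+(X)$ preserving the $\R$-orientation.

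Where your sketch diverges is in the mechanism you propose for the four Seifert-fibered geometries. You package them uniformly as ``covering $X\to D$ intertwining a surjection $G\to H$ with cyclic central kernel tangent to the fiber,'' and locate the obstruction in whether the fiber class lands in that kernel. This conflates two different phenomena. For $\widetilde{SL(2,\R)}$ there is indeed a covering $X\to PSL(2,\R)\subset\rpth$ with infinite cyclic deck group, but the full isometry group descends and there is \emph{no} obstruction. For $\mathrm{Nil}$ the model space embeds directly as an affine domain and again there is no obstruction. For the product geometries the model $X\to\R^3\setminus\{0\}$ (or its hyperbolic analogue) is a diffeomorphism, not a nontrivial covering; the genuine issue is that the $\R$-reflection in $\Isom(X)$ corresponds to $x\mapsto x/|x|^2$, which is not projective, so no homomorphism $G\to\pglf$ exists --- only one from $\Isom_+(X)$. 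That is not a ``kernel'' phenomenon at all.

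So your conclusion (obstruction $\iff$ fibration non-orientable) happens to agree with the paper's, because $\mathrm{Nil}$- and $\widetilde{SL(2,\R)}$-manifolds always have orientable fibrations; but the mechanism you describe would not actually produce the obstruction where it occurs. The cleaner and correct formulation is the paper's: six geometries embed outright, and for the two product geometries only $\Isom_+$ does.
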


All this was  presumably known to Thurston, and was documented by
Thiel\cite{T} and Molnar \cite{M}.  This theorem is a consequence of
the existence of a representation of each of the eight Thurston
geometries $(G,X)$ into $(\rpth,\pglf)$ except that in the case of
the product geometries $S^2\times{\mathbb R}$ and ${\mathbb H}^2\times
{\mathbb R}$ the group $G=Isom(X)$ is replaced by the index-2 subgroup
$\Isom_+(X)$, which preserves the orientation on the ${\mathbb R}$
direction.  In general some 3-manifolds admit a real projective
structure that is not obtained from a Thurston geometric structure
(Benoist~\cite{Benoist3}).  Furthermore exceptional fibered examples
admit exotic real projective structures which do not arise from a
projective representation of the associated geometry.  (Compare
Guichard-Wienhard~\cite{GuichardWienhard} for some examples on twisted
$S^1$-bundles over closed hyperbolic surfaces.)

The manifold $\rpth\# \rpth$ admits a geometric structure modeled on $S^2\times {\mathbb R}$.
Our main result is:
\begin{theorem*} The 3-manifold $M\ =\ \rpth\# \rpth$ does not admit an
$\rpth$-structure. 
\end{theorem*}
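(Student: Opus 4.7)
Suppose for contradiction that $M = \rpth \# \rpth$ admits an $\rpth$-structure, with developing map $\dev : \tilde M \to \rpth$ and holonomy $\hol : \pi_1(M) \to \pglf$. Write $\pi_1(M) = \langle a, b \mid a^2 = b^2 = 1 \rangle$ (the infinite dihedral group) and identify $\tilde M$ with $S^2 \times \R$; one may arrange the deck action so that $a(x, t) = (-x, -t)$ and $b(x,t) = (-x, 2-t)$ are free involutions, with $t := ab$ acting as the translation $(x,t) \mapsto (x, t+2)$. Let $A := \hol(a)$, $B := \hol(b)$, and $T := AB$. The relation $ata^{-1} = t^{-1}$ forces $ATA^{-1} = T^{-1}$ in $\pglf$.

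Although $a$ and $b$ act freely on $\tilde M$, the element $a$ preserves the $2$-sphere $C_A := S^2 \times \{0\}$ setwise, acting on it as the antipodal map, and similarly $b$ preserves $C_B := S^2 \times \{1\}$. The plan is to analyze the restriction $f := \dev|_{C_A}\colon S^2 \to \rpth$: it is an immersion satisfying the antipodal equivariance $f(-x) = A\cdot f(x)$. Combining this with the fact that $\dev$ is a $3$-dimensional local diffeomorphism on a tubular neighborhood of $C_A$ in $\tilde M$, I would extract constraints on $A$; the most favorable conclusion to aim for is that $A$ is a \emph{projective reflection}---an involution of $\pglf$ whose fixed set is a disjoint union of a projective plane $P_A$ and an isolated point $p_A \notin P_A$---and that $f$ is then forced to be the standard $2$-to-$1$ covering $S^2 \to P_A = \rpt$. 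The analogous discussion applies to $B$, $C_B$ and the plane $P_B$.

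The next step is to exclude the degenerate possibilities. If $A$ (or $B$) is trivial then the relation $ATA^{-1} = T^{-1}$ forces $T^2 = 1$, so the image of $\hol$ has order at most $2$; the developing map then descends to a local diffeomorphism from a finite cover $M'$ of $M$ (a closed $3$-manifold) to $\rpth$. Since $M'$ is compact this is a proper local diffeomorphism, and its image is compact and open in $\rpth$, hence equal to $\rpth$, so it is a finite covering. But then $\pi_1(M')$ would be a subgroup of finite index in $\pi_1(\rpth) = \mathbb{Z}/2$, contradicting the fact that $\pi_1(M')$ is a finite-index subgroup of the infinite group $\pi_1(M)$. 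In the remaining case both $A$ and $B$ are projective reflections; their fixed planes $P_A, P_B$ meet in a projective line $\ell$ that is pointwise fixed by $T = AB$, and $\dev$ is locally a $2$-to-$1$ cover of a tubular neighborhood of $P_A$ near $C_A$ (respectively of $P_B$ near $C_B$). The slab $S^2 \times [0,1] \subset \tilde M$ must therefore immerse into $\rpth$ with these prescribed boundary behaviors, and the $T$-equivariance determines how this building block is iterated under $T^k$ to fill out all of $\tilde M$.

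The main obstacle is this final geometric step: showing that no such iterated immersion can exist. The plan is to examine the configuration of the translates $\{T^k P_A, T^k P_B\}_{k \in \mathbb{Z}}$ in $\rpth$, to classify the possible conjugacy classes of $T$ subject to $ATA^{-1} = T^{-1}$ and to both $A, B$ being projective reflections, and to derive in each case a geometric or intersection-theoretic incompatibility that rules out the existence of a local diffeomorphism from the simply connected $3$-manifold $S^2 \times \R$ matching the prescribed local pictures near each $C_A$- and $C_B$-translate. This case analysis, rather than the initial classification of $A$ and $B$, is the hardest part of the argument.
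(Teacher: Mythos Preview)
Your outline takes a genuinely different route from the paper, but it has real gaps that would have to be filled before it could work.

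\textbf{The classification of $A$ is incomplete.} Order-two elements of $\pglf$ fall into three conjugacy classes, represented by matrices with $A^2=-I$ (a free involution on $\rpth$, no fixed points), or $A^2=I$ with eigenvalue multiplicities $(1,3)$ (your ``projective reflection'': fixed set a point and a disjoint plane), or $A^2=I$ with multiplicities $(2,2)$ (fixed set two skew lines). You only exclude $A=1$ and then assume the reflection case; the free and line--line cases are never addressed. The paper in fact treats all three types (its $A_1,A_2,A_3$) and they behave quite differently.

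\textbf{The rigidity claim for $f$ is unjustified and almost certainly false.} Even when $A$ is a projective reflection with fixed plane $P_A$, there is no reason the $A$-equivariant immersion $f=\dev|_{C_A}\colon S^2\to\rpth$ must land in $P_A$, let alone be the standard double cover. Start from the double cover $f_0\colon S^2\to P_A$ and push it off $P_A$ by any small $A$-equivariant normal field; you get a nearby $A$-equivariant immersion that still extends to a local diffeomorphism of a tubular neighborhood, but whose image is not contained in any projective plane. Nothing in the equivariance or in the local-diffeomorphism condition pins $f$ down. Since the rest of your plan (the configuration of the translates $T^kP_A$, $T^kP_B$) is predicated on $\dev$ hitting these specific planes, the whole final step loses its footing.

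\textbf{What the paper does instead.} Rather than trying to control $A$ and $B$ directly, the paper controls $C=AB$. Using the Ehresmann--Weil--Thurston deformation principle together with passage to finite covers, it arranges that $C$ is diagonal over $\R$ with positive eigenvalues, hence lies in a canonical one-parameter subgroup $G\subset\pglf$. The associated flow on $\rpth$ pulls back to the cover $N\cong S^2\times S^1$, and a dynamical/leaf-space analysis (three cases for the eigenvalue pattern of $C$) shows that the induced immersion of $S^2$ into the non-Hausdorff leaf space can never be invariant under the involution coming from $\hol(a)$. The deformation step is what lets the paper avoid exactly the kind of uncontrolled immersion problem that your approach runs into.
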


One impetus to prove this result is the fact that almost all geometric
3-manifolds in the sense of Thurston have 
projective structures. 
This suggested that such structures might be universal for
3-manifolds, an outcome that would have had significant 
consequences since, for example,  every closed simply connected projective manifold 
is a sphere. (This would imply the Poincar\'e conjecture.)
One could imagine a functional, analogous to the Cherns-Simon invariant
(whose critical points are conformally flat metrics) or the projective Weyl
tensor, whose gradient flow would converge to a flat projective structure.
Instead, as our simple example shows, 
the situation turns out to be more intriguing and complex.

\if0
After proving this result we learned from Yves Benoist that this
result can also be deduced from his classification~\cite{Benoist1,Benoist2} of real projective
manifolds with abelian holonomy. 
Since $\pi_1(M)$ is an infinite dihedral group, $M$ admits a double covering space $\hat M$
with abelian (in fact, cyclic) holonomy and $\pi_2(\hat M) = 0$.
If $\gamma$ denotes a generator of the holonomy, then the deck transformation
conjugates $\gamma$ to its inverse, and 
Benoist's classification 
(see \cite{Benoist2}, \S4.4 and Proposition~4.9 in particular)
implies that if the set of eigenvalues of the generator of the holonomy 
are invariant under inversion $\lambda \longmapsto \lambda^{-1}$, 
then $\hat M$ must be aspherical. 

(The case o does occur, when $M$ is aspherical: 
see the example in \S\ref{sec:DihedralExample}.)
A key point in Benoist's classification  is that the developing image of such a $\rpth$-structure is the complement of a disjoint union of projective subspaces of dimension $0$ (a point) and $2$, presenting a basic asymmetry which is incompatible with the deck transformation of 
$\hat M$. 
However we believe that our direct proof, without using
Benoist's general machinery, may suggest generalizations.
\fi

After proving this result we learned from Yves Benoist that this
result can also be deduced from his classification~\cite{Benoist1,Benoist2} of real projective
manifolds with abelian holonomy. However we believe that our direct proof, without using
Benoist's general machinery, may suggest generalizations.
A key point in Benoist's classification 
(see \cite{Benoist2}, \S4.4 and Proposition~4.9 in particular)
 is that the developing image of such an $\rpth$-structure is the complement of a disjoint union of projective subspaces of dimension $0$ (a point) and $2$, presenting a basic asymmetry which is incompatible with the deck transformation of 
$\hat M$. The is is impossible as described in the next paragraph.
The example in  \S\ref{sec:DihedralExample} is a projective 3-manifold 
whose {\em holonomy} is infinite dihedral but not injective. 
In this case the developing image is the complement of two projective lines,
which deformation retracts to a two-torus, and has trivial holonomy. In 
{\em some sense}\footnote{A phrase the first author {\em learned} from Michel Boileau}
this manifold is trying to be $\rpth\# \rpth$.

To give some intuition for the following proof we first show that the
developing map for a real projective structure on $\rpth\#\rpth$ cannot be injective. 
The universal cover of $M$ is $S^2\times{\mathbb R}.$ 
If the developing map embeds this in $\rpth$ then there are two complementary
components and they have the homotopy type of a point and ${\mathbb P}^2.$ 
There is a covering transformation of the universal cover which swaps the ends. 
The holonomy leaves the image of the developing map invariant but swaps the complementary components. 
This is of course impossible since they have different homotopy types. 
Unfortunately one can't in general assume the developing map for a projective structure
is injective.

Currently, it seems to be very difficult to show that a 3-manifold does not admit a projective structure.
We do not know if a connectec sum can ever admit a projective structure. 
Is there a projective structure on a closed Seifert-fibered manifold $\ne S^3$ for which the holonomy of the fiber is trivial?
In this regard, we note that Carri\`ere-d'Albo-Meignez~\cite{MR1233496} 
have shown that several closed Seifert $3$-manifolds do not admit affine structures.

This work was partially supported by NSF grants DMS-0706887,  070781, 1065939, 1207068,  1045292 and 0405605. Furthermore we are grateful for the Research Network in the Mathematical Sciences grant for the GEAR Research Network (DMS-1107367) for partial support as well as the Focused Research Grant DMS-1065965. {\blue Le premier auteur a  appris beaucoup math\'{e}matiques \`{a} partir de Michel et aussi une appr\'{e}ciation de fromage de ch\'{e}vre et du vin ros\'{e}.}

\section{The Ehresmann-Weil-Thurston principle}
Fundamental in the deformation theory of locally homogeneous
(Ehresmann) structures is the following principle,
first observed by Thurston~\cite{Thurston_1979}: 

\begin{theorem}\label{deformations} 
Let $X$ be a manifold upon which a Lie group $G$ acts
transitively. 
Let $M$ have a geometric structure modeled on $(G,X)$
with holonomy representation 
$\pi_1(M)\xrightarrow\rho G$.
For $\rho'$ sufficiently near $\rho$ in the space
of representations $\Hom(\pi_1(M),G)$,
there exists a (nearby) $(G,X)$-structure on $M$ with
holonomy representation $\rho'$.
\end{theorem}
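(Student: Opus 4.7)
The plan is to realize a $(G,X)$-structure as an equivalence class of equivariant pairs $(\dev,\rho)$ with $\dev\colon\tilde M\to X$ an immersion, and then, from $(\dev,\rho)$ and any nearby $\rho'$, to construct a new pair $(\dev',\rho')$ by a cell-wise perturbation of $\dev$ over a compact fundamental domain. Openness of the immersion condition will then furnish the nearby $(G,X)$-structure on $M$ with holonomy $\rho'$.

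To set up the combinatorics, I would first choose a smooth triangulation $T$ of $M$, lift it to a $\pi_1(M)$-equivariant triangulation $\tilde T$ of $\tilde M$, and subdivide until every closed simplex $\sigma\subset\tilde T$ is mapped by $\dev$ diffeomorphically into a coordinate chart of $X$ with compact closure. Compactness of $M$ then produces a compact fundamental domain $D\subset\tilde M$ built from finitely many such simplices, together with finitely many deck transformations $\gamma_1,\dots,\gamma_k\in\pi_1(M)$ that pair the faces of $\partial D$.

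Given $\rho'$ close to $\rho$ in $\Hom(\pi_1(M),G)$, I would construct $\dev'$ on $D$ and then propagate it to all of $\tilde M$ by $\rho'$-equivariance. Over most of the interior of $D$ one simply sets $\dev'=\dev$. On each paired face $F\subset\partial D$ and its partner $\gamma_iF\subset\partial D$, the new equivariance relation demands $\dev'|_{\gamma_iF}=\rho'(\gamma_i)\,\dev'|_F$, which differs from the old relation $\dev|_{\gamma_iF}=\rho(\gamma_i)\,\dev|_F$ by the element $\rho(\gamma_i)^{-1}\rho'(\gamma_i)\in G$, close to the identity when $\rho'$ is close to $\rho$. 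Using a collar neighborhood of $\partial D$ one smoothly interpolates between $\dev$ on the interior and the required new boundary values, producing a $C^1$-small perturbation $\dev'$ of $\dev$ on $D$ that is consistent with the face identifications now performed by $\rho'(\gamma_i)$ rather than $\rho(\gamma_i)$.

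Since $\dev'$ is $C^1$-close to $\dev$ on the compact set $D$, it remains an immersion there; $\rho'$-equivariance then propagates the immersion property to all of $\tilde M$, and the pair $(\dev',\rho')$ descends to a $(G,X)$-structure on $M$ with holonomy $\rho'$. The main obstacle is the equivariant gluing across paired faces of $D$: the collar interpolation must be organized so that the resulting map is genuinely smooth and remains an immersion, and this is precisely the step that requires $\rho'$ to be close to $\rho$. It is also this step that accounts for the non-canonicity of $\dev'$ and explains why the conclusion is only the \emph{existence} of some nearby structure, not a canonical one.
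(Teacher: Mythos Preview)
The paper does not supply its own proof of this theorem: it is stated as a known principle, attributed to Thurston, with a list of references (Hejhal, Earle, Hubbard, Weil, Lok, Canary--Epstein--Green, Goldman, Bergeron--Gelander) where detailed arguments may be found. So there is no in-paper proof to compare against.

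That said, your sketch is essentially the standard argument appearing in several of those references (notably Canary--Epstein--Green and Goldman): realize the structure as a $\rho$-equivariant immersion $\dev\colon\tilde M\to X$, choose a compact fundamental domain built from finitely many simplices, adjust $\dev$ in a collar of the boundary so that the face pairings are now effected by $\rho'(\gamma_i)$ rather than $\rho(\gamma_i)$, and appeal to the openness of immersions in $C^1$. The one place where your outline is slightly glib is the phrase ``smoothly interpolates between $\dev$ on the interior and the required new boundary values'': since the target $X$ is only a manifold, not a vector space, you should say how the interpolation is performed. The usual device is to write $\rho(\gamma_i)^{-1}\rho'(\gamma_i)=\exp(\xi_i)$ for small $\xi_i$ in the Lie algebra of $G$, multiply by $\exp(t\,\xi_i)$ with $t$ a cutoff function supported in the collar, and note that left-multiplication by elements of $G$ close to the identity is $C^1$-close to the identity on compact subsets of $X$. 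With that made explicit, your argument is correct and matches the literature the paper cites.
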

\begin{corollary} 
Let $M$ be a closed manifold. 
The set of holonomy representations of $(G,X)$-structures
on $M$ is open in 
$\Hom(\pi_1(M),G)$.
\end{corollary}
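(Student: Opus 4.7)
The plan is to obtain this as an immediate consequence of Theorem~\ref{deformations}. Let $H \subseteq \Hom(\pi_1(M),G)$ denote the subset consisting of all holonomy representations of $(X,G)$-structures on $M$; the goal is to show that $H$ is open. I would fix an arbitrary $\rho \in H$ and produce an open neighborhood of $\rho$ contained in $H$.

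By definition of $H$, there exists an $(X,G)$-structure on $M$ whose holonomy is $\rho$. This is exactly the hypothesis needed to invoke Theorem~\ref{deformations}, which supplies a neighborhood $U$ of $\rho$ in $\Hom(\pi_1(M),G)$ such that every $\rho' \in U$ arises as the holonomy of some (nearby) $(G,X)$-structure on $M$. In other words $U \subseteq H$, so $\rho$ is an interior point of $H$. Since $\rho$ was arbitrary, $H$ is open.

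There is no serious obstacle here beyond unpacking the definition of the holonomy map: the corollary is essentially a topological repackaging of the theorem, reading it as the statement that the image of the holonomy map on the space of $(X,G)$-structures is an open subset of the representation variety. The hypothesis that $M$ is closed enters only through the statement of Theorem~\ref{deformations} itself, where compactness is typically used to propagate the infinitesimal deformation given by $\rho'$ into a globally defined nearby $(G,X)$-structure on all of $M$.
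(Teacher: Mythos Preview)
Your proposal is correct and matches the paper's approach: the paper states the corollary immediately after Theorem~\ref{deformations} without proof, treating it as the obvious reformulation you give. Your argument---picking $\rho\in H$, invoking the theorem to get an open $U\ni\rho$ with $U\subseteq H$---is exactly the intended one-line deduction.
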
 

This principle has a long history.
In the context of $\cpo$-structures, 
this is due to Hejhal~\cite{Hejhal};
see also Earle~\cite{Earle} and Hubbard~\cite{Hubbard}.
The first application is the theorem of Weil~\cite{Weil}
that the set of Fuchsian representations of the fundamental
group of a closed surface group in $\mathsf{PSL}(2,\R)$
is open. The first detailed proofs of this fact are Lok~\cite{Lok},
Canary-Epstein-Green~\cite{CEG}, and Goldman~\cite{G1}
(the proof in \cite{G1} was worked out with M.\ Hirsch,
and were independently found by A.\ Haefliger).
The ideas in these proofs may be traced to Ehresmann.
For a more recent proof, with applications to rigidity, see
Bergeron-Gelander~\cite{BG}.

 In the sequel $M=\rpth\#\rpth$. By Van Kampen's theorem,
$$
\pi_1M\ \cong \langle \ a,b\ :\ a^2=1=b^2\ \rangle
$$ 
is isomorphic to the infinite dihedral group.

\section{An example with dihedral holonomy}\label{sec:DihedralExample}
Although we prove that no $\rpth$-structure exists on $\rpth\#\rpth$,
there do exist $\rpth$-manifolds whose holonomy is the infinite
dihedral group. Namely, consider two linked projective lines
$\ell_1,\ell_2$ in $\rpth$ and a collineation $\gamma$ having $\ell_1$
as a sink and $\ell_2$ as a source. Then the complement
$$
\Omega:= \rpth\setminus (\ell_1\cup\ell_2)
$$ 
is fibered by $2$-tori
and the region between two of them forms a fundamental domain
for the cyclic group $\langle\gamma\rangle$ acting on 
$\Omega$. The quotient $\Omega/\langle\gamma\rangle$ is
an $\rpth$-manifold diffeomorphic to a $3$-torus having cyclic holonomy
group.

Now choose an free involution $\iota$ of $\rpth$ which interchanges
$\ell_1$ and $\ell_2$, conjugating $\gamma$ to $\gamma^{-1}$.  The
group $\Gamma:=\langle\gamma,\iota\rangle$ acts properly and freely on
$\Omega$ and contains the cyclic subgroup $\langle\gamma\rangle$ with
index two. The quotient $\Omega/\Gamma$ is an $\rpth$-manifold with
cyclic holonomy. It is a Bieberbach manifold, having a 
{\em Euclidean  structure.}

In coordinates we may take $\ell_1$ and $\ell_2$ to be the
projectivizations of the linear subspaces $\R^2\times\{0\}$ and
$\{0\}\times\R^2$ respectively. The projective transformations
$\gamma$ and $\iota$ are represented by the respective matrices:
$$
\gamma \longleftrightarrow \bmatrix 
\lambda & 0 & 0 & 0 \\ 0 & \lambda & 0 & 0  \\
0 & 0 & \lambda^{-1} & 0 \\ 0 & 0 & 0 & \lambda^{-1}
\endbmatrix, \quad
\iota \longleftrightarrow \bmatrix 
0 & 0 & -1 & 0  \\ 0 & 0 & 0 & -1  \\
1 & 0 &  0 & 0  \\ 0 & 1 & 0 & 0 
\endbmatrix
$$
 



\section{Proof of Main Theorem}

Using the presentation of $\pi_1M$ above
there is a 
short exact sequence
$$
1\longrightarrow{\mathbb Z}\longrightarrow
\pi_1M\ \cong\ {\mathbb Z}_2 * {\mathbb Z}_2\longrightarrow {\mathbb 
Z}_2\longrightarrow 1.
$$ 
and the product
$c: =  a b$ 
generates the infinite cyclic normal subgroup.  
Corresponding to the subgroup of $\pi_1M$ generated 
by $a$ and $c^n$ there is an  $n$-fold covering space
$M^{(n)}\rightarrow M$. The manifold $M^{(n)}$ is homeomorphic to $M$.
When $n=2$ the cover is regular and
corresponds to the subgroup generated by $a$ and $bab^{-1}.$ Thus any projective structure on $M$ 
yields other projective structures
on (covers of) $M$ whose holonomy has certain desirable properties. We use this trick throughout the paper.

If $M$ admits an $\rpth$ structure, then there is a developing map
$\dev_M:\tilde{M}\rightarrow \rpth$ with holonomy
$\hol_M:\pi_1(M)\rightarrow \pglf$.  Choose $A,B\in \glf$ with
$[A]=\hol_M(a),\ [B]=\hol_M(b)$ .  Set $C=AB.$

In view of the previous remarks,  after passing to the double
covering-space  $M^{(2)}$, there is a projective structure with the
matrices $A$ and $B$ conjugate.  This property continues
to hold after passing to a further $n$-fold
covering space $M^{(2n)}\longrightarrow M^{(2)}$, thereby replacing
$C^2$ by $C^{2n}$. 
This covering, combined with a small deformation,
enables one to reduce the problem to a restricted class of holonomies.

{\em Outline proof.} If $M$ admits a projective structure then
after a small deformation some finite covering is $N=S^2\times
S^1$ with a projective structure with holonomy contained in a
one-parameter group $G$ that becomes diagonal after conjugacy. Furthermore
there is an involution, $\tau$, of $N$ reversing the $S^1$
factor which is realized by a projective map which normalizes $G$.
The flow generated by $G$ on $\rpth$ pulls back to $N$. The
flow on $\rpth$ has stationary points consisting of certain
projective subspaces corresponding to the eigenspaces of $G$. One
quickly reduces to the case that the flow on $N$ is periodic
giving a product structure. The orbit space is $S^2$. The orbit space
of the flow on $\rpth$ is a non-Hausdorff surface ${\mathcal L}$.
The developing map induces an immersion of $S^2$ into ${\mathcal L}$.
There are only two possibilities for ${\mathcal L}$ corresponding to
the two structures of the stationary set. The possibilities for
immersions of $S^2$ into ${\mathcal L}$ are determined. None of these
is compatible with the action of $\tau$. This contradicts the
existence of a developing map.\qed

\begin{lemma}\label{lem1}
The holonomy is injective.
\end{lemma}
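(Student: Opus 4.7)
\emph{Plan.} I would prove $N:=\ker(\hol_M)=\{1\}$ by contradiction, exploiting the rigid normal-subgroup structure of $\pi_1 M\cong \mathbb{Z}_2*\mathbb{Z}_2$. Setting $c=ab$ as in the excerpt, the relation $aca^{-1}=c^{-1}$ implies that conjugating a reflection $ac^k$ by $c$ produces $ac^{k-2}$, and hence any normal subgroup containing a reflection also contains $(ac^k)(ac^{k-2})^{-1}=c^{-2}$. A short enumeration along these lines shows that the complete list of normal subgroups of $\pi_1 M$ is $\{1\}$, $\langle c^n\rangle$ for $n\ge 1$, $\langle c^2,a\rangle$, $\langle c^2,b\rangle$, and $\pi_1 M$ itself. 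Every nontrivial normal subgroup therefore has finite index and contains an element of infinite order (some positive power of $c$).

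Suppose for contradiction that $N$ is nontrivial. Then $N$ has finite index in $\pi_1 M$, so the associated cover $\hat M:=\tilde M/N\to M$ is finite-sheeted and $\hat M$ is compact. Because the holonomy vanishes on $N$, the developing map $\dev\colon\tilde M\to\rpth$ is $N$-invariant and descends to a local diffeomorphism $\overline{\dev}\colon\hat M\to\rpth$. Since $\hat M$ is compact, $\overline{\dev}$ is proper, and a proper local diffeomorphism between connected manifolds is a covering map. Hence $\overline{\dev}$ realizes $\hat M$ as a finite cover of $\rpth$, so $\pi_1(\hat M)=N$ injects into $\pi_1(\rpth)=\mathbb{Z}/2$. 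This contradicts the presence in $N$ of an element of infinite order, forcing $N=\{1\}$.

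The only substantive step is the algebraic classification of normal subgroups of the infinite dihedral group, a short exercise via the calculation indicated above. Descent of the developing map and the compactness-plus-local-diffeomorphism argument upgrading $\overline{\dev}$ to a covering are standard, and I do not anticipate any serious obstacle here. In particular, the more delicate deformation arguments and the detailed analysis of the $S^2\times\mathbb{R}$ flow sketched in the outline of the main proof are not needed for this lemma; injectivity of the holonomy is essentially a consequence of the algebraic rigidity of $\pi_1 M$ combined with the topological fact that $\pi_1(\rpth)$ is finite.
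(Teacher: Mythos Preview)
Your argument is correct and follows essentially the same line as the paper's proof: a nontrivial kernel has finite index in the infinite dihedral group, so the corresponding cover is compact, the descended developing map is a covering of $\rpth$, and the kernel then injects into $\pi_1(\rpth)\cong\mathbb{Z}_2$, contradicting its infinitude. The only difference is that you spell out the classification of normal subgroups of $D_\infty$ explicitly, whereas the paper simply invokes the (equivalent) fact that every proper quotient of the infinite dihedral group is finite.
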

\begin{proof} 
Otherwise the holonomy has
image a proper quotient of the infinite dihedral group which is
therefore a finite group.  The cover $\tilde{M}'\rightarrow M$
corresponding to the kernel of the holonomy is then a finite cover
which is immersed into $\rpth$ by the developing map. 
Since $\tilde{M}'$ is compact $\dev$ is a covering map.
Hence $\tilde{M}'$ is a covering-space of $\rpth$. 
But $\pi_1\tilde{M}'$ is infinite, 
which contradicts that it is isomorphic to a subgroup of $\pi_1\rpth\cong{\mathbb Z}_2.$ 
\end{proof}
\medskip

Observe that in $\pi_1M$ that $c$ is conjugate to $c^{-1}$ since
$$c^{-1}=(ab)^{-1}=b^{-1}a^{-1}=ba=b(ab)b^{-1}=bcb^{-1}.$$ It follows
that for each eigenvalue $\lambda$ of $C$ the multiplicity of
$\lambda$ is the same as that of $\lambda^{-1}.$ 

\begin{lemma}\label{lem:diagonalizable}
 We may assume $C$ is diagonalizable over $\mathbb R$ and 
has positive eigenvalues.
\end{lemma}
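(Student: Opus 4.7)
The plan is to combine Theorem~\ref{deformations} (which lets me replace the holonomy $\rho$ by any nearby representation $\rho'$) with the freedom to pass to a finite cover $M^{(2n)}\to M$ (which replaces the generator $c$ of the normal $\mathbb{Z}$, and hence the holonomy matrix $C$, by $c^{2n}$ and $C^{2n}$). These two moves together should buy me both semisimplicity and positivity.

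First I would use Theorem~\ref{deformations} to arrange that $C$ has four distinct eigenvalues, placing us in case~(1) of the trichotomy just above the lemma. The representation variety $\Hom(\pi_1 M,\pglf)$ near $\rho$ is cut out by $a^2=b^2=1$, and lifted to $\glf$ by $A^2,B^2\in\{\pm I\}$; within the component containing $\rho$, the locus where $C=AB$ has simple spectrum is Zariski open and dense, so an arbitrarily small perturbation lands there and, by Theorem~\ref{deformations}, comes from a nearby projective structure on $M$ with the same underlying topology.

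Next I would pass to the double cover $M^{(2)}$ (and, if needed, to a further cover $M^{(2n)}$), which replaces $C$ by $C^2$ (resp.\ $C^{2n}$). Because $c$ is conjugate to $c^{-1}$ via $b$, the eigenvalues of $C$ come in reciprocal pairs $\lambda,\lambda^{-1}$. Real reciprocal pairs share a sign, so squaring turns them into a pair of positive reals; thus if all eigenvalues of $C$ are real, then $C^2$ is diagonalizable over $\mathbb{R}$ with positive eigenvalues and we are done.

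The main obstacle is the case of complex eigenvalues. The reciprocal-pair plus real-characteristic-polynomial constraint pins any non-real eigenvalue to the unit circle, $\lambda=e^{\pm i\theta}$, and neither a small deformation within the variety nor any power $C^{2n}$ can move such a pair off the unit circle or make it real positive (unless $\theta\in\pi\mathbb{Z}$, which would collapse us to case~(3) and violate the simplicity already achieved). I expect the substantive content of the proof to be the argument that the initial perturbation can be chosen in a direction producing four distinct \emph{real} eigenvalues of $C$ --- exploiting the explicit possible conjugacy classes of the involutions $A,B\in\pglf$ (whose lifts to $\glf$ have spectrum among $\pm 1$ or $\pm i$) to show that the ``all real'' stratum of the representation variety is nonempty and is accessible from $\rho$ by an arbitrarily small deformation.
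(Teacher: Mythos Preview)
Your outline has the right moving parts (deform via Theorem~\ref{deformations}, then pass to covers), but there are two concrete errors and one missing idea.

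First, the assertion that the simple-spectrum locus for $C=AB$ is Zariski open and dense in the component of $\rho$ is not something you can take for granted, and in fact it is \emph{false} in one of the three cases. After passing to the double cover $M^{(2)}$ the involutions $A,B$ are conjugate, and up to conjugacy and sign $A$ is one of three explicit matrices $A_1,A_2,A_3$. When $A=A_2$ (spectrum $-1,1,1,1$), the $+1$--eigenspaces of $A$ and $B=PAP^{-1}$ are $3$--dimensional and hence meet in a subspace of dimension at least~$2$; thus $C=AB$ always has eigenvalue $1$ with multiplicity $\ge 2$ and can \emph{never} be deformed to have simple spectrum. The paper handles this case separately, showing that generically the spectrum is $\{\lambda,\lambda^{-1},1,1\}$ with a genuine $2$--dimensional $1$--eigenspace, so $C$ is diagonalizable even though its spectrum is not simple. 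In the other two cases the paper does prove density of the simple-spectrum locus, but by an explicit computation (showing that $(\trace C,\trace C^2)$ sweeps out an open set while on the repeated-eigenvalue locus it is confined to a curve), not by a general appeal.

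Second, your claim that the reciprocal-pair and reality constraints force any non-real eigenvalue onto the unit circle is incorrect. The set $\{\lambda,\bar\lambda,\lambda^{-1},\bar\lambda^{-1}\}$ with $\lambda\notin\R$ and $|\lambda|\neq 1$ consists of four distinct numbers and is closed under both conjugation and inversion, so it is a perfectly legal spectrum for $C$ in case~(1). Consequently your proposed strategy of ``deform directly to four distinct real eigenvalues'' is not obviously available, and you give no argument for it.

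The paper's resolution of the complex-eigenvalue obstacle is different and is the idea you are missing: once $C$ is diagonalizable over $\C$, one makes a \emph{further} small perturbation of $P$ so that the arguments of the eigenvalues become rational multiples of $\pi$ (a dense condition), and \emph{then} passes to a finite cover $M^{(2n)}$ so that $C^{2n}$ has real positive eigenvalues. The deformation buys diagonalizability and rational arguments; the cover buys reality and positivity. Your proposal conflates these two steps and tries to get reality from the deformation alone.
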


\begin{proof} After passing to the double cover of $M$ discussed above we may assume
that $A$ and $B$ are conjugate. Since $[A]^2\in \pglf$ is the identity
it follows that after rescaling $A$ we have $A^2=\pm Id,$ thus $A$ is
diagonalizable over ${\mathbb C}$. If $A^2 = Id$ then $A$ has eigenvalues $\pm1.$ Since
we are only interested in $[A]$ we may multiply $A$ by $-1$ and
arrange that the eigenvalue $-1$ has multiplicity at most $2.$
Otherwise $A^2 = -Id$ and $A$ has eigenvalues eigenvalues $\pm i$ each
with multiplicity two.  Thus $A$ is {\em conjugate} in $\glf$ to one of the
matrices:

$$A_1\ =\ \left[\begin{array}{cccc}
0 & 1 & 0 & 0 \\
-1 & 0 & 0 & 0\\
0 & 0 & 0 & 1\\
0 & 0 & -1 & 0\\
\end{array}\right],
\qquad
  A_2\ =\ \left[\begin{array}{cccc}
-1 & 0 & 0 & 0 \\
0 & 1 & 0 & 0\\
0 & 0 & 1 & 0\\
0 & 0 & 0 & 1\\
\end{array}\right],
\qquad
A_3\ =\ \left[\begin{array}{cccc}
-1 & 0 & 0 & 0 \\
0 & -1 & 0 & 0\\
0 & 0 & 1 & 0\\
0 & 0 & 0 & 1\\
\end{array}\right]$$
After conjugating $\hol$ we may further assume that $A=A_i$ for
some $i\in\{1,2,3\}.$ Since $A$ and $B$ are conjugate there is $P\in 
\glf$
such that $B=P\cdot A\cdot P^{-1}.$ Then $C=A\cdot P\cdot A\cdot P^{-1}$.
Changing $P$ is a way to deform $\hol$. The first step is  to show that
when $P$ is in the complement of a certain algebraic subset then $C$
has four distinct eigenvalues and is therefore diagonalizable over ${\mathbb C}$.

Given a homomorphism
$\hol':\pi_1M\rightarrow \pglf$ sufficiently close to $\hol$ by
\ref{deformations} there is a projective structure on $M$ with this 
holonomy.   Consider
the map
$$f:\glf\longrightarrow \slf$$given by
$$f(P) = A\cdot P\cdot A\cdot P^{-1}.$$
This is a regular map defined on  
$\mathsf{GL}(4,\R)$
.
Define $g:\slf\rightarrow{\mathbb R}^2$ 
by $g(Q)=(\trace(Q),\trace(Q^2)).$ This is also a 
regular map.

{\bf Case 1} $A=A_1$ or $A_3$. An easy computation shows 
that the image of $g\circ f$ contains an open set: 
 $$
 \begin{array}{lcc}
 A & P & g\circ f \\
 A_1 &  \left[\begin{array}{cccc}
 0& 1& 0& 0 \\ y& 0& 0& 0 \\ 1& 0& 0& x\\ 0& 0& 1& 1\end{array}\right]
&
   x^{-2}y^{-2} (x^2y + 2x y^2 + x^3 y^2 + x^2 y^3,\   x^2 + 4 y^2 + 2 x^2 y^2 + x^4 y^2 + x^2 y^4)\\
 A_3 & \left[\begin{array}{cccc}
  0 & 0& 1& 0 \\ 1& 0& 0& 1 \\ y+x & 0& 0& y-x \\ 0& 1& 0& 0 \end{array}\right]
 &
    x^{-2}(-2x^2-2xy,\   4y^2)
 \end{array}$$
 
The subset $E\subset \glf$ consisting of all $P$
for which $C=f(P)$ has a repeated eigenvalue
is the affine algebraic set where the discriminant of the characteristic
polynomial of $C$ vanishes. 

{\bf Subclaim} 
$E$ is a proper subset.
\gap

Let $S$ be the set of eigenvalues of $C$. The map
$\tau(z)=z^{-1}$ is an involution on $S$ because $C$ is conjugate to $C^{-1}$. 
Each orbit in $S$ under this involution
contains at most $2$ elements. An orbit of size one consists of either $1$ or $-1$, from which
it follows that
if $P\in E$ then $|S|<4$ and $S\subset\{\pm1,\lambda^{\pm1}\}.$
Thus if $P\in E$  
either $S\subset\{\pm1\}$  or 
$$
\trace(C)=\lambda+\lambda^{-1}+m\quad or\quad \trace(C)=2\lambda+2\lambda^{-1}
$$ 
where $m\in\{0,\pm2\}$ and
$$
\trace(C^2)=\lambda^2+\lambda^{-2}+2.
$$
In each case 
 $\trace(C)$ and $\trace(C^2)$ satisfies an algebraic relation.
Thus
 $\dim [g\circ f(E)]=1$. 
  The image of $g\circ f$ contains an open set therefore
  $E$ is a proper subset, proving the subclaim.\qed

Since $E$ is an algebraic subset of $\glf$ which is a proper subset
 it follows that  $\glf\setminus E$ is open and dense
in the Euclidean topology.  
Hence there is a small perturbation of $P$ and of $\hol$
so that $C$ is diagonalizable over $\C$ and has 4 distinct eigenvalues $\{\lambda_1^{\pm1},\lambda_2^{\pm1}\}$.

By suitable choice of $P$, we can arrange that  
the arguments of $\lambda_1$ and $\lambda_2$ are rational multiples 
of $\pi.$ 
Furthermore passing to a finite covering-space of $M$,
we may assume all eigenvalues of $C$ are real.
Passing to a double covering-space we 
may assume these eigenvalues are positive. However it is possible that
they are no longer distinct.

We have shown in this case that the 
projective structure on (a finite cover of) $M$ may be chosen so that
$C$ is diagonal with real positive eigenvalues, which completes {\bf case 1}.\qed

{\bf Case 2} $A=A_2.$ Then for every choice of
$P$ the $+1$ eigenspaces of $A$ and $B$ intersect in a subspace of
dimension at least $2.$ Since $C=AB$ it follows that there is a
2-dimensional subspace on which $C$ is the identity, and thus $C$ has
eigenvalue $1$ with multiplicity at least $2.$ It is easy to see that
$\trace\circ f$ is not constant,   for example when
$$P\ =\ \left[\begin{array}{cccc}
1 & 0 & 0 & 1 \\
0 & 1 & 1 & 0\\
1 & 1 & 0 & 0\\
0 & x & 0 & 1\\
\end{array}\right]\qquad\qquad \trace(f(P))=4/(1+x)$$

Thus on a dense open set $f(P)\ne 4$ so $C$ has an eigenvalue $\lambda\ne 1$.
 As before, by replacing $C$ by $C^2$ if needed,
we may assume $\lambda\ne \pm1$. Thus
$\lambda^{-1}\ne\lambda$ is also an eigenvalue giving 3 distinct
eigenvalues $\lambda,\lambda^{-1},1,1$. 
Since the $+1$-eigenspace of $C$ has dimension two,
$C$ is diagonalizable over ${\mathbb C}.$ 
The rest of the argument is as before.\end{proof}

\begin{lemma}
We may assume that $C$ is one of the following matrices with
$\lambda_2>\lambda_1>1.$
$$C_1\ =\ \left[\begin{array}{cccc}
\lambda_1 & 0 & 0 & 0 \\
0 & \lambda_1 & 0 & 0\\
0 & 0 & \lambda_1^{-1} & 0\\
0 & 0 & 0 & \lambda_1^{-1}\\
\end{array}\right],
\quad
C_2\ =\ \left[\begin{array}{cccc}
\lambda_1 & 0 & 0 & 0 \\
0 & \lambda_1^{-1} & 0 & 0\\
0 & 0 & 1 & 0\\
0 & 0 & 0 & 1\\
\end{array}\right],
\quad
C_3\ =\ \left[\begin{array}{cccc}
\lambda_1 & 0 & 0 & 0 \\
0 & \lambda_2 & 0 & 0\\
0 & 0 & \lambda_1^{-1} & 0\\
0 & 0 & 0 & \lambda_2^{-1}\\
\end{array}\right]
$$
\end{lemma}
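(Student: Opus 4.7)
The plan is to combine Lemma~\ref{lem:diagonalizable} with the eigenvalue classification given immediately before it. By that lemma, after passing to a finite cover and a small deformation, $C$ is diagonalizable over $\mathbb R$ with all eigenvalues positive. Conjugating $\rho$ by an element of $\glf$ (equivalent to post-composing the developing map with a collineation) does not change the fact that we have an $\rpth$-structure on (a finite cover of) $M$, so we may assume outright that $C$ is a positive diagonal matrix. The remaining task is purely to normalize the multiset of diagonal entries.

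Next I would invoke the three combinatorial patterns for the eigenvalues of $C$ stated just before Lemma~\ref{lem:diagonalizable}, coming from the fact that $c$ is conjugate to $c^{-1}$ in $\pi_1 M$. Positivity of the eigenvalues immediately rules out any $-1$, so pattern (3) collapses to $\{\lambda_1,\lambda_1^{-1},1,1\}$ while patterns (1) and (2) are unchanged. For each reciprocal pair $\{\lambda,\lambda^{-1}\}$ appearing in the multiset, I can swap the two basis vectors it acts on, so without loss of generality each independent $\lambda_i$ satisfies $\lambda_i > 1$. In pattern (1), the four eigenvalues are forced to be distinct, so relabeling the two reciprocal pairs gives $\lambda_2>\lambda_1>1$. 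A final permutation of basis vectors arranges the diagonal entries into the stated order, producing one of $C_1$, $C_2$, $C_3$.

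I do not anticipate any serious obstacle here: once Lemma~\ref{lem:diagonalizable} is in hand and the preceding list of eigenvalue patterns is recalled, this statement is bookkeeping. The only minor point to record explicitly is that the diagonalizing conjugation generally alters both $A$ and $B$, but only $C$ is constrained in the statement, and the subsequent arguments depend on $C$ together with the involution $\tau$ that normalizes the one-parameter group through $C$, which can be tracked under the same conjugation.
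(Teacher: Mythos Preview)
Your proposal is correct and follows exactly the paper's own approach: the paper simply says the result follows from Lemma~\ref{lem:diagonalizable} together with the list of eigenvalue possibilities preceding it. You have merely spelled out the bookkeeping (positivity excludes $-1$, permute basis vectors, relabel so $\lambda_2>\lambda_1>1$) that the paper leaves implicit.
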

\begin{proof}  
The result follows from 
Lemma~\ref{lem:diagonalizable}
and the fact that $C$ is conjugate to $C^{-1}$. Observe that Lemma~\ref{lem1} rules out all eigenvalues are $1$.
\end{proof}

There is a  1-parameter diagonal subgroup $g:{\mathbb
R}\rightarrow G\subset \pglf$ such that
$g(1) = [C].$ For example if $C=C_3$ then this subgroup is:

$$
g_1(t)\ =\ \left[
\begin{array}{cccc}
\exp(\ell_1 t) & 0 & 0 & 0 \\
0 & \exp(\ell_2 t) & 0 & 0\\
0 & 0 & \exp(-\ell_1 t) & 0\\
0 & 0 & 0 & \exp(-\ell_2 t)\\
\end{array}\right]
\qquad\qquad \ell_i\ =\ \log(\lambda_i).
$$

This group $G$ is characterized as the unique one-parameter subgroup 
which contains the cyclic group $H$ generated by $C$ and such that 
every element in $G$ has real eigenvalues. Since $H$ is normal in 
$\hol(\pi_1M)$ it follows from the characterization that $G$ is 
normalized by $\hol(\pi_1M).$

Let $N\rightarrow M$ be the double cover corresponding to the subgroup
of $\pi_1M$ generated by $c.$ Observe that $N\cong S^2\times S^1.$ Let
$\pi:\tilde{N}\rightarrow N$ be the universal cover. Then $N$ inherits
a projective structure from $M$ with the same developing map
$\dev_N=\dev_M.$ The image of the holonomy for this projective structure
on $N$ is generated by $[C].$ Let $z\in{\it gl}(4,{\mathbb R})$ be an
infinitesimal generator of $G$ so that $G=\exp({\mathbb R}\cdot z).$ Thus
for $C_3$ we have

$$
z\ =\ \ \left[\begin{array}{cccc}
\ell_1 & 0 & 0 & 0 \\
0 & \ell_2 & 0 & 0\\
0 & 0 & -\ell_1 & 0\\
0 & 0 & 0 & -\ell_2\\
\end{array}\right].
$$

There is a flow $\Phi:\rpth\times{\mathbb R}\rightarrow \rpth$ on $\rpth$
generated by $G$ given by $$\Phi(x,t)\ =\ \exp(tz)\cdot x.$$ Let $V$ be
the vector field on $\rpth$ velocity of this flow. The fixed points of the
flow are the zeroes of this vector field. The vector field is
preserved by the flow, and thus by $\hol(\pi_1N).$ It follows that $V$
pulls back via the developing map to a vector field $\tilde{v}$ on
$\tilde{N}$ which is invariant under covering transformations and thus
covers a vector field $v$ on $N.$

\medskip 
The subset  $Z\subset \rpth$ on which $V$ is zero is the union of 
the eigenspaces of $C.$  
Thus the possibilities for the zero set  $Z$
are:\\ 
(1) For $C_1$  two disjoint projective lines.\\
(2) For $C_2$ one projective line and two points.\\
(3) For $C_3$ four points.\\

\medskip

\begin{lemma}
$C=C_1$ is impossible.
\end{lemma}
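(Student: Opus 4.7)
My plan is to distill the hypothesis $C=C_1$ into a smooth immersion $S^2\to T^2$, which is ruled out by elementary topology. Building on the reductions already made, and on the further reduction indicated in the outline to the case where the pulled-back vector field $v$ on $N\cong S^2\times S^1$ is nowhere zero and generates a periodic flow with $N/v\cong S^2$, I first analyze the orbit space of $\Phi$ on $\rpth\setminus Z$ in the $C_1$ case. The fixed set $Z=\ell_1\cup\ell_2$ consists of $\mathbb{P}(V_+)$ and $\mathbb{P}(V_-)$, where $V_\pm$ are the two eigenplanes of $C_1$. The continuous flow invariants $[x_1:x_2]\in\ell_1$ and $[x_3:x_4]\in\ell_2$ define a smooth surjection $\rpth\setminus Z\to \ell_1\times\ell_2=T^2$; each fiber is a projective line minus two points, hence two open arcs, each of which is a single flow orbit. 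A brief sign-tracking check (traversing once around either factor flips the sign of the chosen representative of $v_+$ or $v_-$, swapping the two arcs) shows that the induced double cover $\mathcal{L}^\circ:=(\rpth\setminus Z)/\Phi\to T^2$ is connected, so $\mathcal{L}^\circ$ is itself a torus.

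Next I use the developing map $\dev:\tilde{N}\to\rpth$ to produce the desired immersion. Since the lifted vector field $\tilde{v}=\dev^{\ast}V$ is nowhere zero, $\dev$ avoids $Z$. Moreover $\dev$ is equivariant for the two flows, so it descends to a smooth map $\phi:\tilde{N}/\tilde{v}\to\mathcal{L}^\circ$. Because $\tilde{N}/\tilde{v}=N/v=S^2$ and $\mathcal{L}^\circ=T^2$, this yields $\phi:S^2\to T^2$. Since $\dev$ is a local diffeomorphism carrying flow direction to flow direction, the induced map on transverse sections is again a local diffeomorphism, so $\phi$ is an immersion.

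Finally I dispatch such a $\phi$. Its image is open (local homeomorphism) and closed (compact in the Hausdorff $T^2$), hence all of $T^2$ by connectedness; thus $\phi$ is a surjective local homeomorphism from a compact space and therefore a finite covering. This forces $\chi(S^2)$ to equal a positive multiple of $\chi(T^2)$, i.e., $2=k\cdot 0$, which is absurd.

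The main obstacle I anticipate is the connectedness claim for $\mathcal{L}^\circ$ in the first step: it reduces to a sign computation but requires careful tracking of how representatives of $[v_+]$ and $[v_-]$ vary along loops in $T^2$, and is the one step that exploits the specific $C_1$ eigenstructure. The remaining steps are formal once the orbit space is identified.
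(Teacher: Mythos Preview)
There is a genuine gap: you assume, citing the outline, that the pulled-back vector field $v$ on $N$ is nowhere zero (equivalently, that $\dev(\tilde N)$ misses $Z=\ell_1\cup\ell_2$) and that the flow is periodic with orbit space $S^2$. But neither of these has been established for the $C_1$ case. In the paper's logical order, the lemma ``$\dev(\tilde N)$ contains no source or sink'' and the periodicity lemma are proved \emph{after} $C_1$ is eliminated, and their proofs use the presence of an isolated source and sink, a feature specific to $C_2$ and $C_3$. For $C_1$ the zero set $Z$ consists of two projective lines, and there is no a priori reason the developing image should miss them.

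In fact the paper's argument for $C_1$ splits into exactly the two subcases you are conflating. If $\dev(\tilde N)\cap Z=\emptyset$, the paper (like you) obtains a quick contradiction: $\dev$ descends to an immersion $N\to\Omega/\langle C\rangle\cong T^3$, hence a covering map, impossible since $S^2\times S^1$ does not cover $T^3$. Your leaf-space version ($S^2\to\mathcal L^\circ\cong T^2$) is a pleasant variant of the same idea and is correct for this subcase, once one also verifies periodicity (the paper's periodicity argument does adapt here, since flowline closures have endpoints on $\ell_1\cup\ell_2$, assumed outside the image). But the paper then shows this subcase is \emph{not} the only one: the preimages $\alpha_i=\pi(\dev^{-1}\ell_i)$ can be nonempty circles in $N$, and disposing of that possibility requires the substantive part of the proof (basins of attraction, a genus-one Heegaard splitting of $N$, Waldhausen's theorem, and an intersection-number contradiction with $[\ell_1]=0\in H_1(\rpth;\mathbb Q)$). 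Your proposal handles only the easy half and omits this harder half entirely.
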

\begin{proof} If $C=C_1$ then $Z$ is the union of disjoint two lines $\ell_1,\ell_2$
in $\rpth$ which are invariant under $\hol(\pi_1N).$ Then
$\dev^{-1}(\ell_i)$ is a 1-submanifold in $\tilde{N}$ which is a closed
subset invariant under covering transformations. Hence
$$
\alpha_i\ =\ \pi(\dev^{-1}(\ell_i))
$$ 
is a compact $1$-submanifold in $N.$
Furthermore $\alpha_1\cup\alpha_2$ is the zero set of $v.$ 
We claim 
$\alpha_1\cup\alpha_2$ is not empty; equivalently $v$ must be zero
somewhere in $N.$ Otherwise 
$$
\dev:\tilde{N}\rightarrow X\equiv\rpth\setminus(\ell_1\cup \ell_2)
$$ 
covers an immersion
$$
N\rightarrow X/\hol(\pi_1(N))\cong T^3.
$$ 
This is an immersion of one
closed manifold into another of the same dimension and is thus a
covering map. 
However $N\cong S^1\times S^2$ is not a covering space
of $T^3$ since the latter has universal cover Euclidean space and the
former has universal cover $S^2\times{\mathbb R}.$

Thus we may suppose $\alpha_1$ is not empty.  Let $\beta$ be the
closure of a flowline of $v$ with one endpoint on $\alpha_1.$ Now
$\beta$ is a compact $1$-submanifold of $N$ because its pre-image in
$\tilde{N}$ develops into a closed invariant interval in $\rpth$
with one endpoint in each of $\ell_1$ and $\ell_2.$ Thus $\beta$ has
the other endpoint in $\alpha_2$ which is therefore also non-empty. We
claim that $\alpha_1$ is connected and isotopic in $N=S^2\times S^1$
to $*\times S^1.$ But this is impossible, for $A=S^2\times *$
intersects $\alpha_1$ once transversely. But $A$ lifts to
$\tilde{A}\subset \tilde{N}$ and then $\dev(\tilde{A})$ is an immersion
of a sphere into $\rpth$ which meets $\ell_1=\dev(\pi^{-1}\alpha_1)$ once
transversely. However 
$$
[\ell_1]=0\in H_1(\rpth,{\mathbb Q})
$$ 
and intersection number is an invariant of homology classes, so this is
impossible.

It remains to show $\alpha_1$ is connected and isotopic to $S^1\times *.$ 
Let $\gamma_1$ be a component of $\alpha_1$. Let $U$ be the basin of attraction in $N$ of
$\gamma_1.$ Now $\dev(\pi^{-1}\gamma_1)\subset\ell_1$ and an easy
argument shows these sets are equal. Hence $\dev(\pi^{-1}(U))$ contains
a neighborhood of $\ell_1.$ Thus $U$ contains a small torus transverse
to the flow and bounding a small neighborhood of $\gamma_1.$ Since $U$
is preserved by the flow if follows that $U\cong T^2\times{\mathbb R}.$
The frontier of $U$ in $N$ is contained in $\alpha_1\cup\alpha_2.$
Hence $\alpha_1,\alpha_2$ are both connected and $N=\alpha_1\cup U\cup
\alpha_2.$ Thus $N=H_1\cup H_2$ where 
$$
H_i=\alpha_i\cup T^2\times(0,1]\cong S^1\times D^2.
$$ 
This gives a genus-1 Heegaard
splitting of $N=S^2\times S^1.$ By Waldhausen \cite{W} such a splitting is standard. In
particular this implies that $\alpha_1=\gamma_1$ is isotopic to
$S^1\times*.$ \end{proof}

We are reduced to the case that $C$ is $C_2$ or $C_3.$ In each case
there is a unique isolated zero of $V$ which is a source and another
which is a sink.

\medskip

\begin{lemma}
$\dev(\tilde{N})$ contains no source or sink.
\end{lemma}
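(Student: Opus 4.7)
Proof plan. Suppose toward a contradiction that $\dev(\tilde{N})$ contains the sink $p_+$ of $\Phi$; the source case is symmetric through the involution $\tau$ on $\tilde N$ realizing $[A]$, which satisfies $[A][C][A]^{-1}=[C]^{-1}$ and hence swaps source and sink. Pick $\tilde q\in\dev^{-1}(p_+)$ and put $q=\pi(\tilde q)\in N$. Since $\dev$ is a local diffeomorphism intertwining $\tilde v$ with $V$, the point $\tilde q$ is an attracting fixed point of the lifted flow, and so $q$ is an attracting fixed point of $v$ on $N$. Let $B\subset N$ be the immediate basin of attraction of $q$: by the forward flow, $B$ deformation retracts onto $q$, hence $B$ is open and contractible, and the component $\tilde B$ of $\pi^{-1}(B)$ containing $\tilde q$ projects homeomorphically onto $B$.

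The second step is to identify $\tilde B$ with the basin $U$ of $p_+$ in $\rpth$. For $C=C_3$ this basin is $U=\rpth\setminus\{X_2=0\}$, an affine $\R^3$; the analogous statement holds for $C_2$. I would show $\dev|_{\tilde B}:\tilde B\to U$ is a diffeomorphism by a path-lifting argument along the flow: given any $y\in U$, flow it forward until $\Phi_T(y)$ lies in a small ball about $p_+$ that is covered by a $\dev$-chart from a neighborhood of $\tilde q$; invert $\dev$ locally, then flow backward by time $T$ inside $\tilde B$ to obtain a lift of $y$. This realizes $\dev|_{\tilde B}$ as a covering map onto the simply connected $U$, hence as a diffeomorphism; in particular $B\cong\R^3$.

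Third, I would analyze the frontier $\partial\tilde B\subset\tilde N$. By continuity $\dev(\partial\tilde B)$ lies in the projective plane $P:=\rpth\setminus U$, and the local-diffeomorphism property, combined with orientability of $\tilde N$, shows $\partial\tilde B$ is an orientable 2-submanifold on which $\dev$ restricts to a local diffeomorphism to $P\cong\rpt$. Since $\tilde B\cong\R^3$ has a single end, $\partial\tilde B$ is connected; the only orientable connected covering of $\rpt$ is the orientable double cover $S^2\to\rpt$, so $\partial\tilde B\cong S^2$. Therefore $\bar{\tilde B}\cong D^3$, and $\bar B\subset N$ is a closed 3-ball.

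Finally, $\tau$-symmetry produces a closed 3-ball $\bar B'=\tau(\bar B)\subset N$ about a source $q'\ne q$, and the contradiction is to be extracted from the $\tau$-equivariant topology of $N=S^2\times S^1$ and $M=\rpth\# \rpth$. Heteroclinic orbits in $\rpth$ from $p_-$ to $p_+$ lift to heteroclinic orbits in $N$, so $B\cap B'\ne\emptyset$; indeed $q'$ lies on $\partial\bar B$ (as the backward limit of any heteroclinic orbit in $B$) and $q$ lies on $\partial\bar B'$. A Mayer--Vietoris/transfer computation for the free $\tau$-action swapping $\bar B\leftrightarrow\bar B'$ on a suitable piece of $N\setminus(B\cup B')$ should clash with the vanishing $H_2(M;\Q)=H_2(\rpth\#\rpth;\Q)=0$, which forces $H_2(M\setminus\{\mathrm{pt}\};\Q)=0$. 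The principal obstacle is precisely this overlap: the basin closures $\bar B,\bar B'$ interlock rather than being disjoint, so $N$ is not a simple union of two 3-balls and a complement, and the homological calculation must be set up carefully in this overlapping configuration --- for instance by restricting attention to the boundary spheres, or by using the $[A]$-invariant line $\ell=P\cap P'\subset\rpth$ to structure the intersection of $\partial\bar B$ and $\partial\bar B'$.
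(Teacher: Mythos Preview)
Your approach diverges substantially from the paper's, which is short and relies on Reeb stability. The paper observes that $\rpth\setminus\{p\}$ carries a foliation by concentric $2$-spheres degenerating to the projective plane $Q$ spanned by the remaining eigenspaces; this foliation is $\hol(\pi_1N)$-invariant, so it pulls back via $\dev$ and descends to a singular foliation of $N$ whose isolated singularities (the points of $\pi(\dev^{-1}(p))$) are surrounded by small sphere leaves. Reeb stability then forces $N$ minus these open balls to be $S^2\times I$ or a punctured $\rpth$, contradicting $N\cong S^2\times S^1$.

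Your Steps~1--2 can be made rigorous: the basin $B$ is flow-invariant and contractible, so the component $\tilde B$ maps homeomorphically to it, and the forward/backward flow argument does yield a diffeomorphism $\dev|_{\tilde B}:\tilde B\to U$. Parts of Step~3 also go through: one can check that $\dev(\partial\tilde B)\subset P$ and that locally $\partial\tilde B$ agrees with $\dev^{-1}(P)$, so $\partial\tilde B$ is a union of components of the $2$-submanifold $\dev^{-1}(P)$. However, the inference ``$\tilde B\cong\R^3$ has one end, so $\partial\tilde B$ is connected'' is not a valid deduction, and more seriously you have not established compactness: $\tilde N\cong S^2\times\R$ is noncompact, so there is no a priori reason $\overline{\tilde B}$ is compact, nor that $\dev|_{\partial\tilde B}$ is a \emph{covering} map onto $\rpt$, nor that $\overline{\tilde B}\cong D^3$. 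Passing to the compact quotient $N$ gives $\partial B$ compact, but then the relevant map goes to $P/\langle\hol(c)\rangle$ rather than to $P$, and $\hol(c)$ acts on $P$ with fixed loci, so this is no longer a covering-space situation.

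The decisive gap is Step~5. You write that a Mayer--Vietoris or transfer computation ``should clash'' with $H_2(M;\mathbb Q)=0$, but no computation is carried out, and you yourself flag the obstacle: the basin closures $\bar B$ and $\bar{B'}$ genuinely interlock (each contains frontier points of the other), so $N$ is not a union one can feed directly into Mayer--Vietoris. There is no evident homological obstruction at this level of generality---$S^2\times S^1$ certainly contains pairs of overlapping embedded $3$-balls exchanged by a free involution---so something specific to the projective structure must be invoked, and you have not said what. As written, Step~5 is a hope rather than an argument, and without it the proof is incomplete.
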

\begin{proof} By reversing the flow we may change a source into a sink. So
suppose $p$ is a sink in the image of the developing map.  Let $Q$ be
the projective plane which contains the other points corresponding to
the other eigenspaces of $C.$ Then $Q$ is preserved by $\hol(\pi_1N).$
There is a decomposition into disjoint subspaces $\rpth=p\cup\Omega\cup
Q$ where $\Omega\cong S^2\times {\mathbb R}$ is the basin of attraction
for $p.$ Furthermore each of these subspaces is invariant under
$\hol(\pi_1N).$ Thus there is a corresponding decomposition of $N$ into
disjoint subsets: $\pi(\dev^{-1}(p))$ is a finite non-empty set of
points, $\pi(\dev^{-1}(Q))$ is a compact surface, and
$\pi(\dev^{-1}(\Omega))$ an open submanifold.

Now $\Omega$ admits a foliation by concentric spheres centered on $p$
which is preserved by the flow induced by $V$ and hence by
$\hol(\pi_1N).$ This gives a foliation of $\rpth\setminus p$ by leaves,
one of which is $Q\cong{\mathbb P}^2$ and the others are spheres. Hence
this induces a foliation of $N\setminus \pi(\dev^{-1}(p)).$ Since
$\pi(\dev^{-1}(p))$ is not empty every leaf near it is a small
sphere. Thus $N$ has a singular foliation where the singular points
are isolated and have a neighborhood foliated by concentric
spheres. It follows from the Reeb stability theorem \cite{R} that
if a compact connected $3$-manifold  has a foliation such that each component of the boundary
is a leaf and some leaf
is a sphere, then the manifold is $S^2\times I$ or a punctured
$\rpth.$ But this contradicts that the manifold is $S^2\times S^1$ minus
some open balls.\end{proof}
\medskip

\begin{lemma}\label{claim6}
The flow on $N$ given by $v$ is 
periodic and the flow lines fiber $N$ as a product $S^2\times S^1.$
\end{lemma}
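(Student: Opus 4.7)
The plan is to show that $v$ is nowhere zero, produce one periodic orbit by a compactness argument, upgrade to periodicity of every orbit via a connectedness argument on $\tilde N$, and conclude the product fibration.

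First, I would establish that $v$ has no zeros on $N$. The zero set of $v$ equals $\pi(\dev^{-1}(Z))$, where $Z \subset \rpth$ is the fixed set of $V$. The previous lemma rules out the source and sink from $\dev(\tilde N)$; the remaining components of $Z$ --- the two saddles for $C_3$, and the fixed projective line for $C_2$ --- are excluded by variants of that argument: for a saddle $p$, its one-dimensional unstable separatrix develops into a trajectory terminating at the sink, forcing the sink into $\overline{\dev(\tilde N)}$ in the same way as ruled out before; an analogous consideration handles the fixed line of $C_2$.

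Now, with $v$ nowhere zero, each orbit is either a circle or an injective immersion of $\mathbb R$. Suppose for contradiction no orbit is periodic. Pick $\gamma$ with lift $\tilde\gamma$ developing onto a $V$-orbit $\mathcal O \subset \rpth$. The $\omega$-limit $\omega(\gamma) \subset N$ is non-empty, compact, connected, and $v$-invariant. For $r \in \omega(\gamma)$ with $\gamma(t_n)\to r$, choose deck shifts $c^{k_n}$ so that $\tilde r_n := c^{-k_n}\tilde\phi_{t_n}(\tilde q_0) \to \tilde r$; then
\[
\dev(\tilde r) \;=\; \lim_n \exp\bigl((t_n - k_n)z\bigr)\,\dev(\tilde q_0),
\]
and since this limit must avoid the source and sink, the sequence $t_n - k_n$ remains bounded, so after a subsequence $\dev(\tilde r) \in \mathcal O$. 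Hence $\omega(\gamma) \subseteq \pi(\dev^{-1}(\mathcal O))$, a disjoint union of $v$-orbits; by connectedness, $\omega(\gamma)$ lies in a single orbit $\gamma'$. A non-empty compact $v$-invariant subset of an orbit must coincide with the entire orbit (its preimage under the parametrisation is translation-invariant in $\mathbb R$, hence all of $\mathbb R$), so $\gamma'$ has compact image in $N$ and therefore is a circle --- contradicting our assumption. Therefore some orbit $\gamma_0$ is periodic of period $k \in \mathbb Z_{>0}$. For $\tilde q \in \tilde\gamma_0$, $\tilde\phi_k(\tilde q) = c^j\tilde q$ for some $j$; applying $\dev$ gives $\exp(kz) = \exp(jz)$, hence $j = k$. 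The set
\[
U \;=\; \{\tilde q \in \tilde N : \tilde\phi_k(\tilde q) = c^k\tilde q\}
\]
is closed by continuity, and open since $\dev$ is a local diffeomorphism and $\tilde\phi_k$, $c^k$ have the same composition with $\dev$; being non-empty and clopen in connected $\tilde N$, $U = \tilde N$. Descending, $\phi_k = \mathrm{id}$ on $N$, so every $v$-orbit is periodic with period dividing $k$. The orbits then fibre $N$ as a smooth principal $S^1$-bundle over a closed surface $B$; since $N \cong S^2 \times S^1$, $B \cong S^2$ and the bundle is the trivial product.

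The main obstacle is the first step: extending the sphere-foliation argument of the previous lemma to exclude saddle-type fixed points (and the $C_2$ fixed line) from $\dev(\tilde N)$. Saddle basins are not foliated by concentric spheres, so a different local-to-global argument --- invoking the stable/unstable manifold structure and flowing into the sink --- is required.
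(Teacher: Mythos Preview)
Your proposal has a genuine gap, and it is precisely the one you flag at the end: you never prove that $v$ is nowhere zero on $N$, and your sketch for excluding saddle-type zeros does not work. Suppose some $\tilde p\in\tilde N$ develops to a saddle. The unstable separatrix of that saddle in $\rpth$ does run to the sink, but there is no reason the separatrix lies in $\dev(\tilde N)$: the developing map is only an immersion, so the $v$-orbit in $N$ through $\pi(\tilde p)$ could wander forever without its development ever reaching the sink. Even if it did, the previous lemma only excludes the sink from $\dev(\tilde N)$, not from $\overline{\dev(\tilde N)}$, so ``forcing the sink into the closure'' proves nothing. For the fixed $\mathbb P^1$ in the $C_2$ case you give no argument at all. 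Since your $\omega$-limit argument for producing a periodic orbit explicitly requires $v\neq 0$ everywhere, the whole scheme is circular.

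The paper bypasses this entirely. It never first proves that $v$ is nowhere zero. Instead it picks a \emph{generic} flowline $\lambda\subset\rpth$, meaning one whose closure is $\lambda\cup\{\text{source},\text{sink}\}$; such flowlines are dense, so $\lambda$ can be chosen to meet $\dev(\tilde N)$. The previous lemma excludes exactly the source and sink from $\dev(\tilde N)$, and that is precisely enough to make $\dev^{-1}(\lambda)$ a $1$-submanifold of $\tilde N$ \emph{without boundary}. Its image in $N$ is then a compact $1$-manifold, so a component $\gamma$ is a circle; it is homotopically nontrivial because a circle cannot immerse in an arc. This produces a periodic $v$-orbit directly, with no hypothesis on the other zeros of $V$. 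The open--closed argument then shows period-$T$ orbits fill $N$, and the absence of zeros of $v$ falls out as a \emph{consequence}. Your clopen set $\{\tilde q:\tilde\phi_k(\tilde q)=c^k\tilde q\}\subset\tilde N$ is in fact a clean way to run that last step and would work perfectly --- once a single periodic orbit has been supplied by the paper's generic-flowline trick rather than by your unproved first step.
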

\begin{proof}Let $\lambda$ be the closure of a flowline of $V$ in 
$\rpth$ which has endpoints on the source and sink of $V.$ Such 
flowlines are dense therefore we may choose $\lambda$ to contain a 
point in $dev(\tilde{N}).$ Then $dev^{-1}(\lambda)$ is a non-empty 
closed subset of $\tilde{N}$ which is a $1$-submanifold without 
boundary, since the source and sink are not in $\dev(\tilde{N}).$ 
Hence $\pi(\dev^{-1}\lambda)$ is a compact non-empty $1$-submanifold 
in $N.$ Let $\gamma$ be a component. If $\gamma$ were contractible in 
$N$ then it would lift to a circle in $\tilde{N}$ and be mapped by 
the developing map into $\lambda.$ But this gives an immersion of a 
circle into a  line which is impossible. Thus $[\gamma]\ne0\in\pi_1(N).$

Let $T>0$ be the period of 
the closed flow line $\gamma.$ Let $U$ be the subset of $N$  which is 
the union of closed flow lines of period $T.$ We will show $U$ is 
both open and closed. Since $U$ is not empty and $N$ is connected, 
the claim follows. 

Choose a small disc, $D\subset N,$ transverse to the flow 
and meeting $\gamma$ once. Let $\tilde{D}\subset\tilde{N}$ be a lift 
which meets the component $\tilde{\gamma}\subset\pi^{-1}(\gamma).$ 
The union, $\tilde{Y},$ of the flowlines in $\tilde{N}$ which meets 
$\tilde{D}$ maps homeomorphically by the developing map into a 
foliated neighborhood of the interior of $\lambda.$ Let $\tau$ be the 
covering transformation of $\tilde{N}$ given by 
$[\gamma]\in\pi_1(N).$ Then $\tau$ preserves $dev(\tilde{Y})$ and 
preserves $\tilde{\gamma}$ therefore preserves $\tilde{Y}.$ 
Furthermore 
$$
Y\ =\ \tilde{Y}/\tau\cong dev(\tilde{Y})/hol(\gamma)\ \cong\  S^1\times D^2
$$ 
is foliated as a product. Thus $Y$ is a solid torus 
neighborhood of $\gamma$ in $N$ foliated as a product by flowlines. 
This proves $U$ is open. The limit of flowlines of period $T$ is a 
closed flowline with period $T/n$ for some integer $n>0.$ But $n=1$ 
since the set of flowlines of period $T/n$ is open. Thus $U$ is 
closed.
\end{proof}

\medskip
Let $X=\rpth\setminus Z$ be the subset where $V\ne0.$ Then $X$ is
foliated by flow lines. Let $\Lf$ be the leaf space of the foliation
of $X.$ Then $\Lf$ is a connected $2$-manifold which may be non-Hausdorff.  Since $G$ is
normalized by $\hol(\pi_1M)$ it follows that this group acts on $\Lf.$
Since $\hol(\pi_1N)\subset G$ the action of $\hol(\pi_1N)$ on $\Lf$ is
trivial so the action of $\hol(\pi_1M)$ on $\Lf$ factors through an
action of ${\mathbb Z}_2.$ Thus the holonomy gives an involution on
$\Lf.$ Below we calculate $\Lf$ and this involution in the remaining
cases $C_2,C_3.$

Since $\dev(\tilde{N})\subset X$ there is a map of 
the leaf space of the induced foliation on $\tilde{N}$ into $\Lf.$ By 
Lemma~\ref{claim6} the leaf space of $\tilde{N}$ is the Hausdorff sphere $S^2.$ 
The induced map $h:S^2\rightarrow\Lf$ is a local homeomorphism, which 
we shall call an {\em immersion.} Since $\dev(\tilde{N})\subset \rpth$ 
is invariant under $\hol(\pi_1M)$ it follows that $h(S^2)\subset\Lf$ 
is invariant under the involution. Below we determine all immersions 
of $S^2$ into $\Lf$ and show that  the image is never invariant under 
the involution. This means the remaining cases $C=C_2$ or $C=C_3$ are 
impossible, proving the theorem.

\medskip

\begin{lemma}Case  $C=C_2$ is impossible.
\end{lemma}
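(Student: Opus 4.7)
My plan is to proceed in three steps: describe the non-Hausdorff leaf space $\mathcal{L}$ for the $C_2$ case, identify the involution $\sigma$ induced by $\hol(\pi_1M)$ on $\mathcal{L}$, and enumerate the immersions $h:S^2\to\mathcal{L}$ to show none has a $\sigma$-invariant image.

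For the first step, in the $C_2$ case the fixed set is $Z=\{p_1,p_2\}\cup L$ where $p_1,p_2$ are the isolated source and sink and $L=\{[0:0:*:*]\}$ is a projective line of fixed points. On $X=\rpth\setminus Z$ the orbits of the flow split into four families: two Type I orbits on the line $L'=\{[*:*:0:0]\}$; an $S^1$ of Type II orbits in $\{x_1=0\}$ running from $p_2$ to $L$; an $S^1$ of Type II$'$ orbits in $\{x_2=0\}$ running from $L$ to $p_1$; and a two-parameter Type III family parametrized by the flow invariants $u=x_1x_2/(x_3^2+x_4^2)\in\mathbb{R}^*$ and the projective direction $[x_3:x_4]\in\mathbb{RP}^1$. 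I trace the $u\to 0$ degeneration of Type III orbits and show that each Type III orbit accumulates simultaneously on a Type II orbit and a Type II$'$ orbit sharing the same endpoint of $L$, so these pairs are non-separated in $\mathcal{L}$; the resulting $\mathcal{L}$ is a specific non-Hausdorff $2$-manifold whose Hausdorffization is a $2$-sphere.

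For the second step, since $[A]=\hol(a)$ must normalize the one-parameter subgroup $G$ and conjugate its infinitesimal generator $z$ to $-z$, the matrix $A$ has block-diagonal form: an antidiagonal $2\times 2$ first block swapping the $x_1$ and $x_2$ axes (up to scale), and an invertible $2\times 2$ second block $M$ acting on $(x_3,x_4)$. The induced involution $\sigma$ on $\mathcal{L}$ then interchanges $p_1$ with $p_2$, swaps the Type II and Type II$'$ circles via the rotation determined by $M$, and acts on the Type III orbits.

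For the third step, since $h:S^2\to\mathcal{L}$ is a local homeomorphism with $S^2$ compact, Hausdorff, and simply-connected, the image $h(S^2)$ is an open compact connected 2-submanifold. Analyzing the local structure of $\mathcal{L}$ near the Type II/Type II$'$ non-separated clusters (where $\mathcal{L}$ locally looks like the plane with a doubled line), and using the simple-connectedness of $S^2$, I argue that the possible images are constrained to a small list of types. Essentially, the image consists of the two Type III cylinders (capped off by the Type I points) together with either the Type II circle or the Type II$'$ circle as its ``equator'', with multi-sheeted variants ruled out by topological constraints from the simple-connectedness and Euler characteristic of $S^2$. Since $\sigma$ swaps Type II with Type II$'$, each such image type is sent to a distinct one, so no image is $\sigma$-invariant, completing the proof for $C=C_2$. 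The main obstacle is this final step: carefully enumerating the immersion types in the non-Hausdorff setting and verifying that each is ruled out by the $\sigma$-action.
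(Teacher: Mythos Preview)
Your three-step strategy is exactly the paper's: describe the leaf space $\mathcal{L}$, identify the involution induced by $\hol(a)$, enumerate immersions $h:S^2\to\mathcal{L}$, and observe that the involution swaps the two possible images. Your description of $\mathcal{L}$ in terms of orbit types (Type I, II, II$'$, III) is equivalent to the paper's description of $\mathcal{L}$ as two spheres $S_1,S_2$ (around source and sink) glued along their open hemispheres; your Type II and Type II$'$ circles are precisely the paper's two non-separated equators, and your analysis of the block form of $A$ matches the paper's assertion that the involution swaps $S_1$ with $S_2$ and hence the two equators.

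The one substantive difference is the mechanism for your ``main obstacle,'' the enumeration in step~3. You propose to argue via Euler characteristic and simple-connectedness that the image must be the Type~III region capped by the Type~I points plus exactly one of the two equatorial circles. The paper instead reduces this to a one-dimensional problem: foliate $\mathcal{L}$ by circles parallel to the equators (plus the two Type~I points), so that the quotient is a non-Hausdorff interval $\mathcal{I}^*\cong[-1,1]\cup\{0'\}$ with two origins. Pulling back by $h$ gives a singular foliation of $S^2$ with isolated centers; the complement of the centers is foliated by circles, hence is an annulus, so there are exactly two centers and the quotient of $S^2$ is $[-1,1]$. The induced map $\bar h:[-1,1]\to\mathcal{I}^*$ is a local homeomorphism sending endpoints to endpoints, and the only such map is an embedding hitting exactly one of $0,0'$. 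This immediately forces $h$ to be one of the two embeddings $S_1,S_2\hookrightarrow\mathcal{L}$, with no need to separately rule out multi-sheeted variants. Your approach should be completable, but the paper's 1D reduction is the cleaner route and is worth adopting; it also reappears in the $C_3$ case.
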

\begin{proof} The zero set of $V$ consists of a 
point source, a point sink, 
and a ${\mathbb P}^1$ with hyperbolic dynamics in the transverse direction. 
Every flowline either starts at the source,  or ends at the sink, or does both.  Let $S_1,S_2$ be small spheres around the 
source and sink transverse to the flow. 
The quotient map $X\rightarrow\Lf$ 
embeds each of these spheres, and the union is all of $\Lf.$

It is easy to check that $\Lf$ is obtained from $S_1$ and 
$S_2$ by the following identifications. Regard each sphere as a copy 
of the unit sphere, $S^2,$ in ${\mathbb R}^3.$ Decompose this sphere 
into an equator and northern and southern hemispheres:  
$$
S^2=D_+\cup  E\cup D_-
$$ where 
\begin{align*}
E\ &=\ S^2\cap \{\ x_3=0\ \} \\
D_+\ &=\ S^2\cap \{\ x_3>0\ \} \\ 
D_-\ & =\ S^2\cap\{\ x_3<0\ \}.
\end{align*}

Using the identifications 
of $S_1$ and $S_2$ with $S^2$ identify $D_+\subset S_1$ with 
$D_+\subset S_2$ using the identity map. Identify $D_-\subset S_1$ 
with $D_-\subset S_2$ using the 
map
$(x_1,x_2,x_3)\mapsto(-x_2,x_1,x_3).$

Thus $\Lf$ may be regarded as a sphere with with an extra copy of the
equator.  However one also needs to know a neighborhood basis for the
points on the extra equator. This is determined by the above
description. We show below that every immersed sphere in $\Lf$ is one
of these two embedded spheres. 
The involution swaps $S_1$ and $S_2$
and therefore swaps the two equators in $\Lf$. 
The embedded spheres each contain only one equator and therefore there is no immersion of a sphere into $\Lf$ 
whose image is preserved by the involution.

It remains to determine the possible immersed spheres in $\Lf$. 
There is a decomposition of $\Lf$ into disjoint subsets, 
two of which are the points $(0,0,\pm1)\subset D_{\pm}$ 
and the other subsets are circles which foliate the complement. In particular each of the two equators is a leaf of this foliation.

Suppose $A$ is a sphere and $h:A\rightarrow\Lf$ is an immersion. Then
the pre-images of the decomposition give a decomposition of $A.$ There
are finitely many decomposition elements which are points. Call the
set of these points $P.$ Since $h$ is an immersion, $A\setminus P$ is
decomposed as a $1$-dimensional foliation. Furthermore since $A$ is
compact and the $1$-dimensional leaves in $\Lf$ are closed, their
pre-images in $A$ are compact thus circles.  Thus $A\setminus P$ is
foliated by circles and thus an open annulus. Hence the quotient space
of $A$ corresponding to the decomposition is a closed interval ${\cal
I}\cong[-1,1].$ The endpoints correspond to center type singularities
of a singular foliation on $A.$ The quotient space of the
decomposition of $\Lf$ is a non-Hausdorff interval, ${\cal I}^*\cong
[-1,1]\cup\{0'\},$ with 2 copies of the origin. The endpoints
correspond to the two decomposition elements that are points. The
immersion $h$ induces a map $\overline{h}:{\cal I}\rightarrow {\cal
I}^*.$ Since $h$ is an immersion $\overline{h}$ is also an immersion
(local homeomorphism). Thus $\overline{h}(\pm1)=\pm1.$ The only such
immersion is an embedding which contains one copy of the origin.  This
implies $h$ is an embedding of the form claimed.\end{proof}

It follows from the preceding results that $\dev(\tilde{N})$ is disjoint from the zeroes of the vector field.

\begin{lemma}
Case $C=C_3$  is impossible.
\end{lemma}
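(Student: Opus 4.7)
The plan is to follow the strategy of the $C_2$ case step by step: explicitly describe the leaf space $\mathcal{L}$ and the involution $\iota$ induced on it, classify all immersions $h:S^2\to\mathcal{L}$, and verify that no such immersion has image invariant under $\iota$. For $C=C_3$ the zero set $Z$ of $V$ is four isolated points of $\rpth$: a point source $e_4$, a point sink $e_2$, and two saddle fixed points $e_1, e_3$. By the earlier lemma that $\dev(\tilde N)$ contains no source or sink, its image avoids $e_4$ and $e_2$. The orbits of the flow in $X=\rpth\setminus Z$ partition into: a 2-parameter family of regular orbits of type $(e_4, e_2)$, filling $\{x_2\neq 0,\ x_4\neq 0\}$, which has two connected components in $\rpth$; two 1-parameter families of type $(e_4, e_1)$ and $(e_3, e_2)$, each consisting of two open arcs; and six isolated orbit classes of types $(e_4, e_3)$, $(e_1, e_2)$, and $(e_3, e_1)$ (two each).

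As in the $C_2$ case, small transverse spheres $S_1$ around $e_4$ and $S_2$ around $e_2$ each embed in $\mathcal{L}$ as a 2-sphere carrying a distinguished equator circle---the $(e_4, e_1)$-stratum for $S_1$ and the $(e_3, e_2)$-stratum for $S_2$---together with two distinguished antipodal points on that equator (the $(e_4, e_3)$-orbits on $S_1$ and the $(e_1, e_2)$-orbits on $S_2$). The two embedded spheres overlap precisely on the open part consisting of the regular orbits; together with the two $(e_3, e_1)$-orbits, which appear as two further non-Hausdorff points, they exhaust $\mathcal{L}$. The involution $\iota$, induced by conjugating $[C_3]$ to $[C_3]^{-1}$, swaps $e_4\leftrightarrow e_2$ and $e_1\leftrightarrow e_3$, and so on $\mathcal{L}$ it interchanges the two spheres $S_1, S_2$; in particular it swaps the two equator circles and the two pairs of distinguished antipodal points.

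To classify immersions $h:S^2\to\mathcal{L}$ I would introduce an auxiliary one-dimensional decomposition of $\mathcal{L}$ analogous to the ``concentric spheres'' decomposition used for $C_2$. The natural source is the two flow-invariant foliations of $\rpth$ by concentric spheres based at $e_4$ and at $e_2$ (each filling the complement of a projective plane which contains the other three fixed points). These descend to a decomposition of each embedded sphere $S_i\subset\mathcal{L}$ into latitude circles parallel to its equator together with two polar points, and the two distinct equators (belonging to $S_1$ and $S_2$ respectively) become two distinct leaves of the induced decomposition of $\mathcal{L}$. Passing to the leaf quotient then reduces the classification of immersed spheres to a classification of maps between one-dimensional non-Hausdorff spaces, and the interval-immersion argument of the preceding case shows that $h(S^2)$ must be an embedded copy of either $S_1$ or $S_2$---each of which contains exactly one of the two equators. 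Since $\iota$ swaps the two equators, $\iota$-invariance of $h(S^2)$ would force it to contain both, contradicting the classification and completing the proof. The main obstacle is the third step: setting up the correct auxiliary decomposition and handling the additional strata---in particular the two $(e_3, e_1)$-orbits, which lie on neither $S_1$ nor $S_2$ and are preserved (not swapped) by $\iota$---so that the interval-quotient argument of the $C_2$ case adapts cleanly.
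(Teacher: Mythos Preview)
Your overall strategy---describe $\mathcal{L}$, classify immersions $S^2\to\mathcal{L}$, show none has $\iota$-invariant image---is exactly the paper's. The divergence is precisely at the step you flag as the obstacle, and the paper resolves it by a different choice of chart and decomposition.

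The paper does \emph{not} build $\mathcal{L}$ from the two spheres $S_1,S_2$ around the source and sink. It instead takes a torus $T$, the boundary of a thin tube about the invariant projective line $\ell_-$ through the sink $p_{---}$ and the adjacent saddle $p_{+--}$. This $T$ is transverse to $V$ and meets every flowline except the four lying inside $\ell_\pm$, so $\mathcal{L}=T\cup\{u_+,v_+,u_-,v_-\}$ with four exceptional points. In this chart your troublesome saddle--saddle orbits are ordinary Hausdorff points of $T$ (namely the intersection of the two circles $\alpha_\pm=T\cap A_\pm$, where $A_\pm$ is the invariant $\mathbb{P}^2$ omitting $p_{\pm\pm\pm}$); they require no special handling.

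The auxiliary decomposition is a \emph{single} foliation of $T$ by circles parallel to $\alpha_-$, transverse to $\alpha_+$. Your proposed decomposition---latitude circles on each $S_i$ from concentric spheres at both source and sink---is not a decomposition of $\mathcal{L}$ at all: on the overlap $S_1\cap S_2$ the two latitude families correspond on $T$ to $\alpha_-$-parallel and $\alpha_+$-parallel circles, which are transverse, so no common refinement supports the interval-quotient trick.

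With the single foliation the argument is genuinely different from the $C_2$ case. A deleted neighbourhood in $\mathcal{L}$ of $u_-$ or $v_-$ is an annulus whose core circles are \emph{leaves}; a deleted neighbourhood of $u_+$ or $v_+$ is an annulus whose core circles are \emph{transverse} to the foliation. Pulling back along $h:A\cong S^2\to\mathcal{L}$ and excising small discs about $P=h^{-1}\{u_\pm,v_\pm\}$ yields a planar surface $A_-$ carrying a nonsingular foliation in which each boundary circle is either a leaf or everywhere transverse. Doubling gives a foliated closed surface, hence $\chi=0$, hence $|P|=2$ and $A_-$ is an annulus; since the foliation is pulled back from a circle fibration of $T$, all leaves are compact, forcing both boundary circles to be of the same type. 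Thus $h(P)\subset\{u_-,v_-\}$ or $h(P)\subset\{u_+,v_+\}$, never both. As $\iota$ swaps these two pairs, $h(A)$ cannot be $\iota$-invariant.

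Note the conclusion is weaker than your target ``$h(S^2)$ equals $S_1$ or $S_2$''; the paper neither proves nor needs that, and it is not obviously true.
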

\begin{proof} The zero set of $V$ consists of a $4$ points. We label them as
$p_{+++},p_{++-},p_{+--},p_{---}.$ The labelling reflects how many
attracting and how many repelling directions there are. The number of
$-$ signs is the number of attracting directions.  Thus $p_{---}$ is
the sink, $p_{+++}$ is the source.

Every flowline starts at a point with a $+$ label and ends at a point
with a $-$ label. Every ${\mathbb P}^2$ containing three of these four
points is invariant under the flow.

Let $\ell_{-}$ be the ${\mathbb P}^1$ containing $p_{---}$ and $p_{+--}.$
Let $\ell_+$ be the ${\mathbb P}^1$ which contains $p_{+++}$ and
$p_{++-}.$ The restriction of $V$ to each of $\ell_{\pm}$ has on
source and one sink and no other zeroes. There are thus two flowlines
contained in each of $\ell_{\pm}.$

\begin{figure}[ht]	 
\begin{center}
	 \psfrag{T}{$T$}
	 \psfrag{a}{$p_{---}$}
	 \psfrag{d}{$p_{+++}$}
	 \psfrag{b}{$p_{+--}$}
	 \psfrag{w}{$w$}
	 \psfrag{c}{$p_{++-}$}
	 \psfrag{up}{$u_+$}
	 \psfrag{lm}{$\ell_-$}
	 \psfrag{lp}{$\ell_+$}
	 \psfrag{L}{\large${\mathcal L}$}
	 \psfrag{sp}{$S_+$}
		 \includegraphics[scale=0.6]{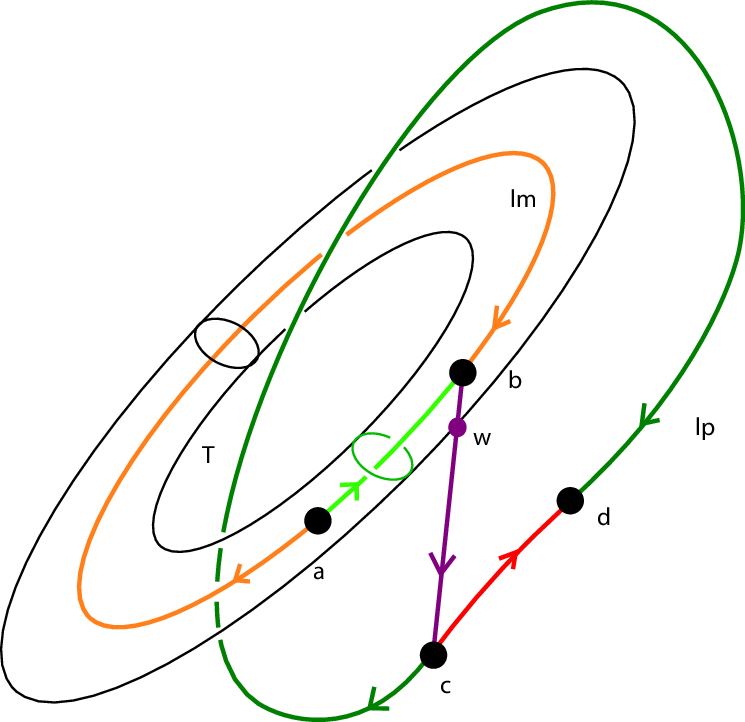}
\end{center}
  \caption{Flowlines for case $C_3$}\label{C3torus}
\end{figure}

Let 
$T$ be a torus transverse to $V$ and which is the boundary of a small 
neighborhood of $\ell_-.$ 
Then $T$ intersects every flowline once 
except the 4 flowlines in $\ell_{\pm}.$ Hence $\Lf$ may be identified with $T$ 
plus $4$ more points. Two of these points come from $\ell_+$ and the 
other two from $\ell_-.$

Since $aca^{-1}=c^{-1}$ it follows that 
$\hol(a)$ conjugates $\hol(c)$ to $\hol(c^{-1})$ and thus $\hol(a)$ 
permutes the zeroes of $V$ by reversing the sign labels. Thus 
$p_{---}\leftrightarrow p_{+++}$ and $p_{+--}\leftrightarrow 
p_{++-}.$

Observe that $T$ can be moved by the flow to a small torus 
around $\ell_+.$ Thus  the involution on $\Lf$ maps the subset 
corresponding to $T$ into itself and swaps the pair of points 
corresponding to $\ell_+$ with the pair corresponding to $\ell_-.$ We 
show below that every immersion of a sphere into $\Lf$ contains 
either the pair of points corresponding to $\ell_+$ or the pair 
corresponding to $\ell_-$ but not both pairs. As before the image of 
the developing map gives an immersion of a sphere into $\Lf$ which is 
preserved by the involution. Thus no such immersion exists and the 
remaining case $C=C_3$ is impossible.

\begin{figure}[ht]	 
\begin{center}
	 \psfrag{T}{$T$}
	 \psfrag{vm}{$v_-$}
	 \psfrag{um}{$u_-$}
	 \psfrag{sm}{$S_-$}
	 \psfrag{w}{$w$}
	 \psfrag{vp}{$v_+$}
	 \psfrag{up}{$u_+$}
	 \psfrag{am}{$\alpha_-$}
	 \psfrag{ap}{$\alpha_+$}
	 \psfrag{L}{\large${\mathcal L}=$}
	 \psfrag{sp}{$S_+$}
		 \includegraphics[scale=0.8]{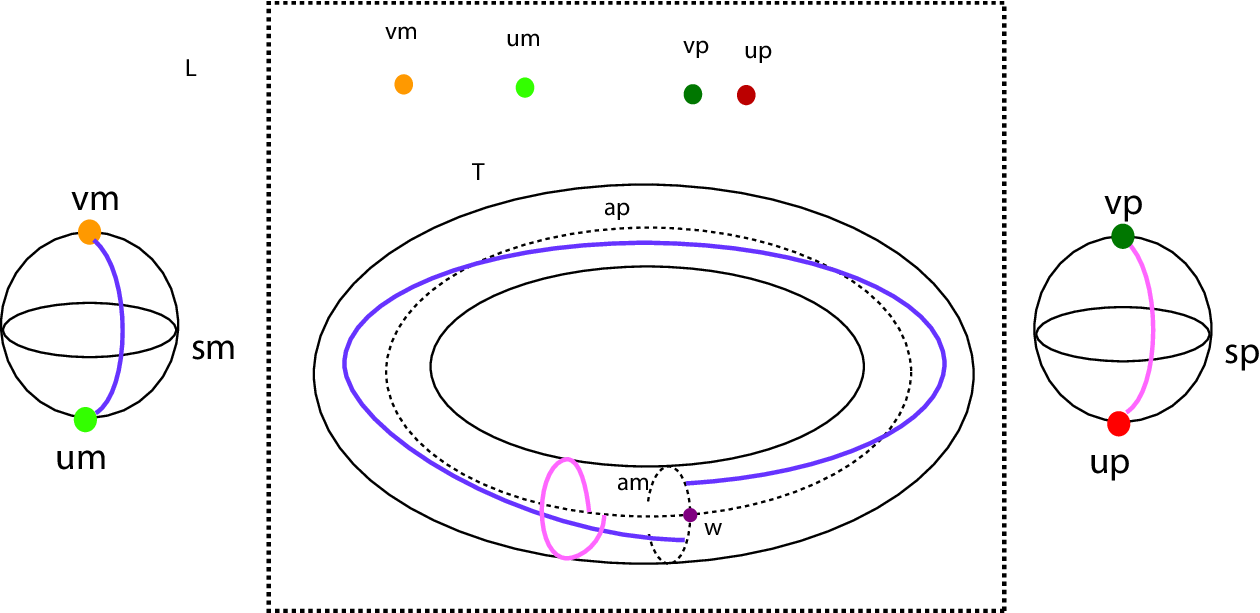}
\end{center}
  \caption{Non-Hausdorff surface ${\mathcal L}$ for case $C_3$}\label{C3new}
\end{figure}

We first describe $\Lf$ in a bit more detail. Let $S_+$ (resp. $S_-$)
be a small sphere around $p_{+++}$ (resp. $p_{---})$ transverse to the
flow. Then every flowline meets $T\cup S_-\cup S_+.$ We next describe
the intersection of the images of $T$ and $S_{\pm}$ in $\Lf.$ We may
choose $S_-$ to be a small sphere inside $T.$ The two flowlines in
$\ell_-$ meet $S_-$ but do not meet $T.$ We call these points $u_-,v_-$ in
$S_{-}$ and the corresponding points in $\Lf$ 
{\em exceptional points.} 
The remaining flowlines that meet $S_-$ intersect $T$ in the
complement of the circle $\alpha_-\subset T$ where $\alpha_-=T\cap
A_-$ and $A_-$ is the ${\mathbb P}^2$ containing the four zeroes of $V$
except $p_{---}.$ A small deleted neighborhood in $\Lf$ of an
exceptional point corresponding to a flowline in $\ell_-$ is an
annulus on one side of $\alpha_-,$ either $\alpha_-\times(0,1)$ or
$\alpha_-\times(-1,0),$ depending on which of the two exceptional
points corresponding to a flowline in $\ell_-$ is chosen.  Similarly
the image of $S_+$ intersects the image of $T$ in the complement of
the circle $\alpha_+= T\cap A_+$ where $A_+$ is the ${\mathbb P}^2$
containing the four zeroes of $V$ except $p_{+++}.$ The circles
$\alpha_-$ and $\alpha_+$ on $T$ meet transversely at a single point $w$
corresponding to the flowline between $p_{+--}$ and $p_{++-}$.

Decompose $\Lf$ into subsets as follows. 
Decompose the image of $T$ by
circles given by a foliation of $T$ by circles parallel to $\alpha_-$
and that are transverse to $\alpha_+.$ The remaining $4$ exceptional
points in $\Lf$ are also decomposition elements.  Let $A$ be a sphere
and $h:A\rightarrow\Lf$ an immersion. As before we deduce that there
is a finite set $P\subset A$ of decomposition elements which are
points. The remaining decomposition elements give a foliation of
$A\setminus P.$ There is a small deleted neighborhood $U\subset
A\setminus p$ of $p\in P$ such that $h(U)$ is an open annulus
$\beta\times(0,1)\subset T$ whose closure consists of two disjoint
circles either parallel to $\alpha_-$ or to $\alpha+_.$ It follows
that the foliation on the subsurface $A_-\subset A$ with these small
open neighborhoods of $P$ removed has the property that each component
of $\partial A_-$ is either transverse to the foliation or is a leaf
of the foliation.  By doubling $A_-$ along the boundary one obtains a
foliation on a closed surface. Hence $A_-$ is an annulus and the
behavior of the foliation on both components of $\partial A_-$ is the
same. If the boundary components are leaves then $h(A)$ contains the
two points corresponding to $\ell_-.$ Otherwise $h(A)$ contains the
two points corresponding to $\ell_+.$ This completes the proof of the
final case, and thus of the theorem.

We remark that the above discussion is similar to the case the
developing map is injective discussed before the proof. 
We argued
above that there is $S^2\subset \tilde{N}\cong S^2\times{\mathbb R}$
immersed in ${\mathbb P}^3$ by the developing map and with the source on
the inside (relative to the flow) and the other three critical points 
on the outside. These three critical points lie on an ${\mathbb P}^2$ which is
preserved by the flow. Indeed they are  the critical points of
a Morse function on this ${\mathbb P}^2$ given by the flow. In {\em some
sense} the proof says this ${\mathbb P}^2$ is {\em outside} the immersed sphere.\end{proof}

\small
\bibliography{refs} 
\bibliographystyle{abbrv}

\address{Department of Mathematics, University of California Santa Barbara, CA 93106, USA}
\email{cooper@math.ucsb.edu}

\address{ Department of Mathematics at the University of Maryland, College Park, MD 20742.} 
\email{wmg@math.umd.edu} 
\Addresses

\end{document}